\renewcommand{\footnote}{\endnote}
\newtheorem{theorem}{Theorem}[section]
\newtheorem*{Theorem}{Theorem}
\newtheorem{lemma}[theorem]{Lemma}
\newtheorem{proposition}[theorem]{Proposition}
\newtheorem{corollary}[theorem]{Corollary}
\theoremstyle{definition}
\newtheorem{remark}[theorem]{Remark}
\newtheorem{example}[theorem]{Example}
\begin{document}

\title{Symbolic dynamics of a piecewise rotation:\\ Case of the non symmetric bijective maps}

\author{Nicolas B\'edaride\footnote{ Aix Marseille Université, CNRS, Centrale Marseille, I2M, UMR 7373, 13453 Marseille, France. Email: nicolas.bedaride@univ-amu.fr}\and  Idrissa Kabor\'e\footnote{Universit\'e Polytechnique de Bobo Dioulasso,01 PB 1091, Bobo-Dioulasso, Burkina Faso. Email: ikaborei@yahoo.fr}}

\date{}

\maketitle

\begin{abstract}
We consider a specific piecewise rotation of the plane that is continuous on two half-planes, as studied by some authors like Boshernitzan, Goetz and Quas. If the angle belongs to the set $\{\frac{\pi}{2},\frac{2\pi}{3},\frac{\pi}{4}\}$, we give a complete description of the symbolic dynamics of this map in the non symmetric bijective case.
\end{abstract}

\section{Introduction}

Consider a line $l$ in $\mathbb{R}^2$; it splits the plane on two half-planes. Now, we define a piecewise isometry $T$ on $\mathbb{R}^2$ such that the restriction to each half-plane is given by a rotation. The two rotations are of the same angle with different centers named $O_1, O_2$, which may lay outside of the corresponding half-plane. Without loss of generality, we can identify the plane with the set of complex numbers $\mathbb{C}$ and the line with the real axis $\mathbb{R}$. Then if the centers have coordinates $z_1, z_2$, the map is given by:
$$\begin{array}{ccc}
\mathbb{C}\setminus\mathbb{R}&\rightarrow&\mathbb{C}\setminus\mathbb{R}\\
z&\mapsto&T(z)=\begin{cases}e^{2i\pi \theta}(z-z_1)+z_1\quad Im(z)> 0\\ e^{2i\pi \theta}(z-z_2)+z_2\quad Im(z)<0 \end{cases}
\end{array}
$$

This map is called a {\bf piecewise rotation}. Up to date it is perhaps the piecewise isometry which has been studied the most. 
This map can be of three forms: either it is bijective or non injective or non surjective.
In \cite{Bosh.Goet.03}, Boshernitzan and Goetz show that in the two last cases the map is either globally attractive or globally repulsive. In the bijective case, this map can be written in the following form (see \cite{Goet.Quas.09}), we will use during the rest of the paper. 
The number  $\sigma$ is a real number, such that up to an homothety the images of the origin by the two rotations are at distance $2$. 
$$T_{\theta,\sigma}(z)=\begin{cases}e^{2i\pi \theta}(z+\sigma+1)\quad Im(z)> 0\\ e^{2i\pi \theta}(z+\sigma-1)\quad Im(z)<0 \end{cases}$$
The parameter $\theta$ is called the angle of the map by a slight abuse of notation. 

Goetz and Quas have shown that for a rational angle every orbit is bounded (assuming that the second parameter is small), see \cite{Goet.Quas.09}. 
In order to prove this result they introduce symbolic dynamics for this map. One code the orbit of a point $z$ on a two-letters alphabet according to the half plane where is $T^n_{\theta,\sigma}(z)$. It defines a subshift of $\{1,2\}^\mathbb N$. 

In the present paper we want to give a precise description of the language of this subshift.
The study of the symmetric case (see definition after) has begun in \cite{Bed.Kab.14}. Here we do not want to restrict our study to the symmetric maps inside the bijective case. Nevertheless we restrict our study to a finite family of angles. 

Our method of investigation is close to the one introduced in \cite{Bed.Ca.11} for the outer billiard outside regular polygons. The main idea is to find a reasonable set, where we can consider the first return map and prove that it is conjugated to the initial map. This allows us to use substitutions in order to describe the language of the map.

\begin{figure}
\begin{center}

\includegraphics[width= 9cm]{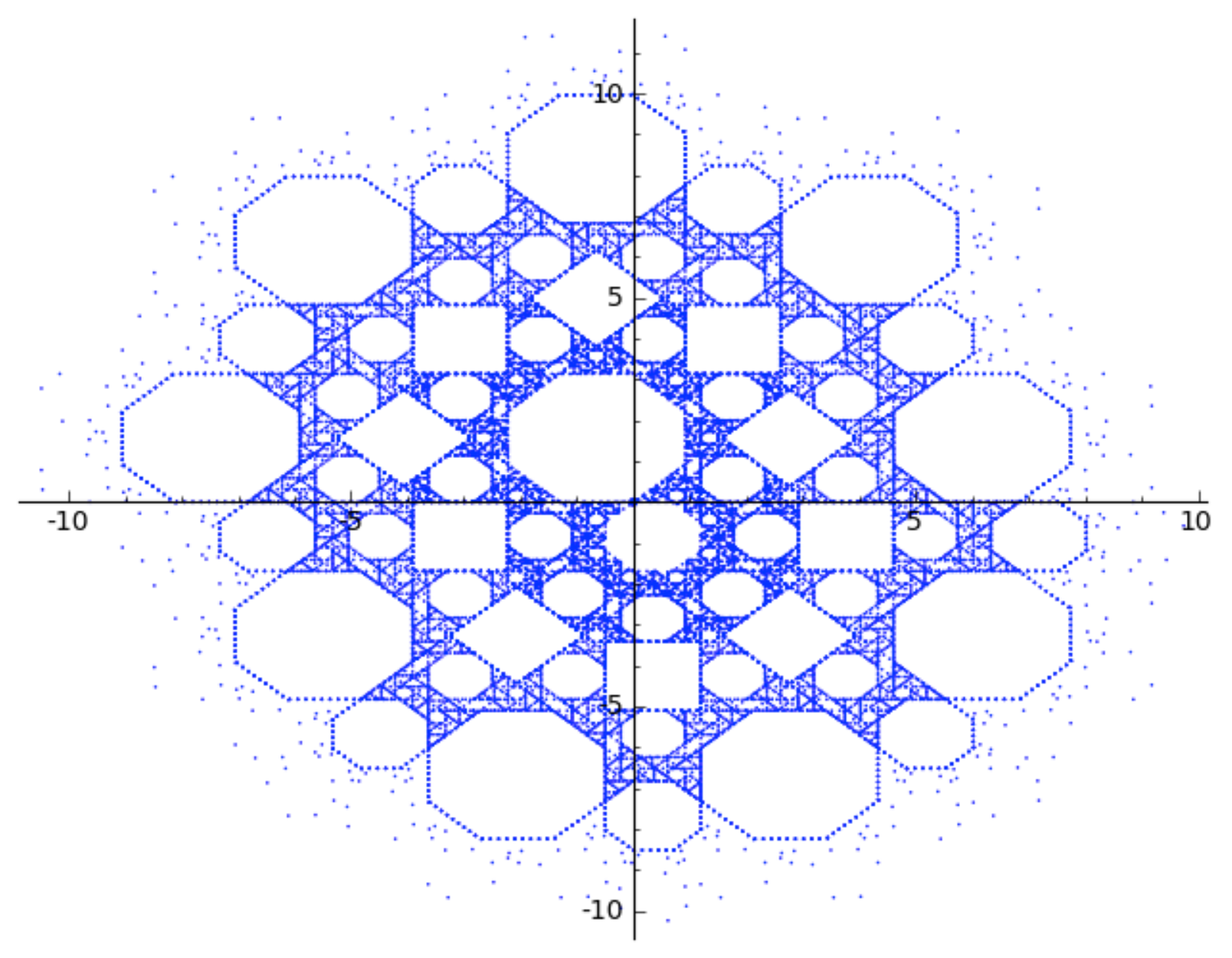}
\caption{Non symmetric map with $\sigma=\frac{1}{3}, \theta=\frac{1}{8}$.}\label{fig=nonsym=simple}
\end{center}
\end{figure}

\medskip

{\bf Keywords}: Piecewise isometries, substitutions, renormalization, words.
{\bf AMS classification}: 37A05, 37B10.\\
{\bf Thanks}: The second author would like to thanks EMS-Simons for Africa for its grant.
\section{Definition of a piecewise isometry and its language}
\subsection{Basic notions on words}
We need to introduce some notions of symbolic dynamics, see \cite{Pyth.02}.
Let $\mathcal{A}$ be a finite set of symbols called  alphabet, a {\bf word} is a finite string of elements in $\mathcal{A}$, its length is the number of elements in the string. The set of all finite words over $\mathcal{A}$ is denoted $\mathcal{A}^*$. 
A (one sided) infinite sequence of elements of $\mathcal{A}$, $u=(u_n)_{n\in\mathbb{N}}$, is called an infinite word. We denote the infinite word by using concatenation $u=u_0\dots u_n\dots$
The infinite word $u$ is {\bf periodic} if there exists a finite word $v_0\dots v_n$ such that $u=v_0\dots v_n v_0\dots v_n v_0\dots v_n\dots$ Such an infinite word is denoted $v^\omega$. 
A word $v_0\dots v_k$ appears in $u$ if there exists an integer $i$ such that $u_{i}\dots u_{i+k}=v_0\dots v_{k}$. In this case, we say that $v$ is a factor of $u$. For an infinite word $u$, the {\bf language} of $u$ (respectively the language of length $n\in\mathbb N$)  is the set of all words (respectively all words of length $n$) in $\mathcal{A}^*$ which appear in $u$. We denote it by $L(u)$ (respectively $L_{n}(u)$). 

A {\bf substitution} is a morphism of the monoid $\mathcal{A}^*$. In general we define the substitution by the images of the elements of $\mathcal A$. A language is said to be a {\bf substitutive language} if there exists a finite set of substitutions such that each word in the language is a factor of some compositions of the substitution applied on one letter.

\begin{remark}
Since a substitution is a morphism, it is entirely described by the images of the letters of the alphabet. We will represent it as an array. In a column we read a letter and its image. For example, the next array describes the well known Fibonacci substitution defined over $\mathcal A=\{1,2\}$.
$$\begin{array}{c|c|c}
a&1&2\\
\hline
\sigma(a)&12&1
\end{array}$$
\end{remark}

\subsection{Subshift associated to the piecewise rotation}
Let $P_1,P_2$ be the two half-planes bounded by the discontinuity line of $T_{\theta,\sigma}$. 
Let $\phi:\mathbb{C}\mapsto \{1;2\}^\mathbb{N}$ be {\bf the coding map} where the image of a complex number $z$ is given by $\phi(z)=(u_n)_{n\in\mathbb{N}}$ such that $T^n_{\theta,\sigma}(z)\in P_{u_n}$ for all integer $n$. 
The image by the coding map of the points which have well defined orbit defines a subshift. 
This subshift is defined as 
$$\Sigma=\overline{\{\phi(z), z\in X\subset\mathbb C\}}.$$
The {\bf language} of $\Sigma$ is the set of finite words which appears in some sequence $u\in\Sigma$.
For an infinite word $u$ in $\Sigma$, a {\bf cell} is the set of points which are coded by this word:
$\{z\in \mathbb{C}, \phi(z)=u\}$.
Remark that the coding map fulfills the condition: the point $z$ has a periodic orbit implies that $\phi(z)$ is a periodic word.

Remark that every point $z\in\mathbb{C}$ has not a well defined orbit for $T$. Consider the set of complex numbers $z$ such that there exists an integer $n$ with $T^nz\in\mathbb{R}$. This set of points is called the {\bf set of discontinuity points}, but it is of zero Lebesgue measure and we can ignore it. In the following, we will only consider orbits of points outside this set. In Figure \ref{fig=nonsym=simple}, we can see one example of the discontinuity set. The polygons correspond to the periodic cells.
\subsection{Restriction of the study of the map}

Due to the formulation of the map $T_{\theta,\sigma}$, the centers of rotations are the points
\begin{equation}
z_1=\frac{e^{2i\pi\theta}(\sigma+1)}{1-e^{2i\pi\theta}},\quad z_2=\frac{e^{2i\pi\theta}(\sigma-1)}{1-e^{2i\pi\theta}}.
\label{centres}
\end{equation}
If $\sigma=0$ the map is called a {\bf symmetric map}.
The imaginary parts of the centers of rotations are equal to $\frac{\sigma\pm 1}{2\tan{\pi\theta}}$. In all our cases $\tan{\pi\theta}$ is a positive number, thus both of the two centers of rotations define a periodic point of $T_{\theta,\sigma}$ if and only if $\sigma\in[-1,1]$. A simple computation shows:
\begin{lemma}
There are three cases:
\begin{itemize}
\item The points $O_1$ and $O_2$ belong to the half-plane $Im(z)>0$, and $O_2$ is far away from the origin.
\item The centers of rotations belong to the same half plane, and $O_1$ is far away from the origin.
\item The two points belong to different half-planes. 

\end{itemize}

Moreover, there is a symmetry of the problem.

For all $\theta, \sigma$, let us denote by $S$ the map defined by $S(z)=-z$. Then, we have for all $z\in\mathbb C$:
$$S\circ T_{\theta,\sigma}\circ S=T_{\theta,-\sigma}$$
\end{lemma}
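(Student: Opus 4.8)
The statement consists of two essentially independent claims, and both reduce to direct computation; I sketch each in turn.

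For the trichotomy, the plan is to locate $O_1=z_1$ and $O_2=z_2$ explicitly from \eqref{centres}. The first step is to record that
$$\frac{e^{2i\pi\theta}}{1-e^{2i\pi\theta}}=-\frac12+\frac{i}{2\tan\pi\theta},$$
which follows from $|1-e^{2i\pi\theta}|^{2}=2(1-\cos 2\pi\theta)=4\sin^{2}\pi\theta$ together with $\frac{\sin 2\pi\theta}{1-\cos 2\pi\theta}=\cot\pi\theta$. Hence $z_1=-\frac{\sigma+1}{2}+\frac{i(\sigma+1)}{2\tan\pi\theta}$ and $z_2=-\frac{\sigma-1}{2}+\frac{i(\sigma-1)}{2\tan\pi\theta}$, so the imaginary parts are exactly the ones quoted just before the statement. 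Since $\tan\pi\theta>0$ for every angle considered (all the relevant values satisfy $\pi\theta\in(0,\tfrac\pi2)$), the sign of $\mathrm{Im}(z_1)$ is that of $\sigma+1$, the sign of $\mathrm{Im}(z_2)$ is that of $\sigma-1$, and one always has $\mathrm{Im}(z_1)>\mathrm{Im}(z_2)$. The three cases then correspond precisely to the three possible sign patterns of the pair $\big(\mathrm{sign}(\sigma+1),\mathrm{sign}(\sigma-1)\big)$: the pattern $(+,+)$ puts both centers strictly in $\{\mathrm{Im}(z)>0\}$, the pattern $(-,-)$ puts both strictly in $\{\mathrm{Im}(z)<0\}$, and the pattern $(+,-)$ puts one center on each side; the remaining pattern $(-,+)$ cannot occur, and the degenerate values $\sigma=\pm1$, for which one center lies on the line, are discarded. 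To identify, in the first two cases, which center is the one lying far from the origin, I would compare moduli: $|z_j|=\frac{|\sigma\pm1|}{2\sin\pi\theta}$, so the comparison is governed by $|\sigma+1|$ versus $|\sigma-1|$, and a one-line check in each regime names the distant center and records the associated half-plane.

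For the symmetry it suffices to evaluate $S\circ T_{\theta,\sigma}\circ S$ on each open half-plane. If $\mathrm{Im}(z)>0$ then $\mathrm{Im}(S(z))=-\mathrm{Im}(z)<0$, so $T_{\theta,\sigma}(S(z))=e^{2i\pi\theta}(-z+\sigma-1)$, and applying $S$ gives $e^{2i\pi\theta}(z-\sigma+1)=e^{2i\pi\theta}\big(z+(-\sigma)+1\big)=T_{\theta,-\sigma}(z)$; the case $\mathrm{Im}(z)<0$ is identical after exchanging the roles of $\sigma+1$ and $\sigma-1$. Since the two branch formulas coincide with those of $T_{\theta,-\sigma}$, this proves $S\circ T_{\theta,\sigma}\circ S=T_{\theta,-\sigma}$ on $\mathbb C\setminus\mathbb R$, as claimed.

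I do not expect a genuine obstacle here: as the authors themselves put it, it is \emph{a simple computation}. The only point requiring vigilance is consistency of conventions — which of $O_1,O_2$ governs which half-plane, the (positive) sign of $\tan\pi\theta$ that makes the classification clean, and, in the two \emph{same half-plane} cases, which of the two centers is the far one — after which everything is plain substitution into \eqref{centres} and into the definition of $T_{\theta,\sigma}$.
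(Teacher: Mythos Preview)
Your argument is correct and is precisely the direct computation the authors have in mind: the paper gives no proof at all, stating only that \emph{a simple computation shows} the lemma and that \emph{the proof is left to the reader}. Your evaluation of $\mathrm{Im}(z_j)$ from \eqref{centres} to obtain the trichotomy, together with the branch-by-branch verification of $S\circ T_{\theta,\sigma}\circ S=T_{\theta,-\sigma}$, is exactly that omitted computation.
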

The proof is left to the reader.

\begin{corollary}
The cases $\sigma<0$ and  $\sigma>0$ are symmetric and can be deduced one from the other.
\end{corollary}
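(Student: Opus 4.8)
The plan is to read the corollary off directly from the conjugacy $S\circ T_{\theta,\sigma}\circ S=T_{\theta,-\sigma}$ proved in the Lemma, by tracking what the involution $S(z)=-z$ does to the coding. First, since $S^2=\mathrm{id}$, iterating the conjugacy gives $T^n_{\theta,-\sigma}=S\circ T^n_{\theta,\sigma}\circ S$ for every $n\in\mathbb N$. As $S$ maps the real axis onto itself, it preserves the set of discontinuity points, so $z$ has a well-defined $T_{\theta,-\sigma}$-orbit if and only if $S(z)$ has a well-defined $T_{\theta,\sigma}$-orbit.

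Second, $S$ exchanges the two open half-planes bounded by the discontinuity line, since $\mathrm{Im}(z)>0$ if and only if $\mathrm{Im}(S(z))<0$; hence $S(P_1)=P_2$ and $S(P_2)=P_1$. Combining this with the first step, a point $z$ with well-defined orbit satisfies $T^n_{\theta,\sigma}(z)\in P_{u_n}$ if and only if $T^n_{\theta,-\sigma}(S(z))\in P_{3-u_n}$. Writing $\tau$ for the letter-exchange morphism defined by $\tau(1)=2$ and $\tau(2)=1$ (which acts both on $\{1,2\}^*$ and on $\{1,2\}^{\mathbb N}$), this is exactly the identity $\phi_{-\sigma}\circ S=\tau\circ\phi_\sigma$ between the two coding maps.

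Finally, $\tau$ is a homeomorphism of the sequence space commuting with the shift, so passing to closures yields $\Sigma_{-\sigma}=\tau(\Sigma_\sigma)$; in particular the language of $\Sigma_{-\sigma}$ is the image under $\tau$ of the language of $\Sigma_\sigma$, periodic words correspond to periodic words, and $S$ carries the periodic cells of $T_{\theta,\sigma}$ bijectively onto those of $T_{\theta,-\sigma}$. This is the assertion of the corollary. There is essentially no obstacle here: the only points deserving a line of justification are that $S$ is an involution and that it preserves the real axis (hence the set of points with well-defined orbit), which is what lets the conjugacy descend to the subshifts and their closures.
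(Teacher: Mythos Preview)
Your proof is correct and follows the same approach as the paper: both use the conjugacy $S\circ T_{\theta,\sigma}\circ S=T_{\theta,-\sigma}$ from the Lemma together with the observation that $S$ swaps the two half-planes, concluding that the language for parameter $-\sigma$ is obtained from that for $\sigma$ by exchanging the two letters. Your version is simply more explicit about the intermediate steps (iterating the conjugacy, the identity $\phi_{-\sigma}\circ S=\tau\circ\phi_\sigma$, and passing to closures) than the paper's very terse argument.
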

\begin{proof}
Consider the case $\sigma<0$. By preceding Lemma it is enough to work with the conjugate by $S$ of the map of parameter $-\sigma$. 
Then this conjugate is a piecewise isometry. It has for centers of rotations, the points $S(O_1), S(O_2)$. So we remark that the language of $S\circ T_{\theta,\sigma}\circ S$ is obtained from the language of $T_{\theta,\sigma}$ by the exchange of letters $0$ and $1$.
\end{proof} 
Thus, we can restrict the values of the parameter $\sigma$ to the interval $[0,+\infty)$.
 In \cite{Bed.Kab.14}, we gave a complete description of the bijective symmetric map for the angle living in a finite set.


\section{Results and overview of the paper}
\subsection{Results}
We consider the finite set $\theta\in\{\frac{1}{4},\frac{1}{3}, \frac{1}{8}\}$ and assume that $\theta$ belongs to this set. Then we give a description of the symbolic dynamics for any value of $\sigma$. Each of the following sections is devoted to one angle. The last one deals with the last point of the theorem. This shows the limit on the parameter $\sigma$ in the statement of one Theorem of Goetz-Quas, see \cite{Goet.Quas.09}.

\begin{Theorem}
Consider a bijective piecewise rotation of angle $\theta$ and parameter $\sigma$.

\begin{itemize}
\item If $\theta=\frac{1}{4}$, for every $\sigma\in\mathbb R_+$, the language of the piecewise rotation is substitutive and can be described explicitly. 

\item For the angle $\frac{1}{3}$, the language of the piecewise rotation can be described explicitly.

\item If $\theta=\frac{1}{8}$ we obtain:
\begin{itemize} 
\item For $\sigma=\frac{1}{3}$, the language is substitutive.
\item In the case $\sigma=1$, the language can be described by a substitution. 
\item If $\sigma=3$, then the language is substitutive.
\end{itemize}

\item For every $\theta$ in this finite set, there exists $\sigma>0$ such that an orbit made of periodic cells does not form a closed ring.
\end{itemize}
\end{Theorem}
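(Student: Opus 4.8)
The plan is to argue the existence directly: for each $\theta\in\{\frac{1}{4},\frac{1}{3},\frac{1}{8}\}$ it is enough to exhibit one value of $\sigma>0$ together with one periodic cell whose $T_{\theta,\sigma}$-orbit, viewed as a finite family of polygons, is not a topological annulus. The mechanism I would exploit is that in the small-parameter regime of Goetz-Quas the first return of $T_{\theta,\sigma}$ to a small disc around a genuine periodic center is conjugate to a rotation, and this is exactly what forces the periodic cells around that center to pile up into concentric closed rings; so to defeat the phenomenon it suffices to push $\sigma$ into a range where one of the two organizing centers is no longer a periodic point. By the Lemma of Section~2 we have $Im(O_1)=\frac{\sigma+1}{2\tan\pi\theta}$ and $Im(O_2)=\frac{\sigma-1}{2\tan\pi\theta}$, so for $\sigma\geq 1$ the center $O_2$ leaves the half-plane $P_2$ on which it acts and ceases to be a fixed point of $T_{\theta,\sigma}$; at $\sigma=1$ it even collapses to the origin, that is, onto the discontinuity line itself. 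I would take such a $\sigma$ (for $\theta=\frac{1}{8}$ the values $\sigma=1$ and $\sigma=3$ are already treated in the preceding section, and for $\theta=\frac{1}{4}$ and $\theta=\frac{1}{3}$ every value of $\sigma$ is covered there).

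Concretely, for the chosen angle and $\sigma$ I would use the renormalization and the explicit substitutive description of the language of $\Sigma$ obtained in the corresponding section to single out a periodic word $u=v^\omega$ whose cell lies in the neighbourhood of the degenerate center $O_2$, near where it meets the discontinuity line $\mathbb{R}$. Its period $p=|v|$ and its itinerary $u_0\dots u_{p-1}$ are read off from the language, and the $p$ cells of the orbit are the convex polygons $C_k=\bigcap_{j=0}^{p-1}T_{\theta,\sigma}^{-j}P_{u_{k+j}}$, with indices modulo $p$. I would then check two things by direct computation: that consecutive cells $C_k$ and $C_{k+1}$ share a full edge, so that the union $\bigcup_k C_k$ is a connected chain of polygons, and that the two extreme cells of this chain are bounded, on the sides facing one another, by segments of the discontinuity set $\bigcup_{n\geq 0}T_{\theta,\sigma}^{-n}\mathbb{R}$ rather than by a common edge, so that the chain has two free ends. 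This makes $\bigcup_k C_k$ simply connected instead of an annulus, which is precisely the claim; the degenerate center $O_2$ enters only heuristically, indicating in which part of the plane the ring-producing mechanism has been switched off and hence where to look for such a non-closing orbit.

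The real work is the explicit planar bookkeeping: one must compute the polygons $C_k$ and prove the adjacency and the non-closure rigorously, instead of reading them off a picture such as Figure~\ref{fig=nonsym=simple}. Each cell is cut out by finitely many affine inequalities whose coefficients are polynomials in $\cos 2\pi\theta$, $\sin 2\pi\theta$ and $\sigma$; for $\theta=\frac{1}{4}$ these constants are rational, and for $\theta=\frac{1}{3}$ and $\theta=\frac{1}{8}$ one argues in $\mathbb{Q}(\sqrt3)$ and $\mathbb{Q}(\sqrt2)$ respectively, so in every case the verification is a finite exact computation, if a lengthy one; along the way one must also confirm that $u$ is realized by a genuine periodic point of exact period $p$ which is not a discontinuity point, and this the substitution machinery of that section provides. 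A leaner variant avoids choosing $u$ at all: extract from the substitutive description two periodic cells forced to be neighbours along any hypothetical ring around $O_2$, verify that each of them is bounded, on the side facing the other, by a segment of the discontinuity set, and conclude at once that no such ring exists; carrying this out for the three angles completes the proof.
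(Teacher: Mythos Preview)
Your high-level strategy---for each angle fix one $\sigma$, name a periodic word, and verify by a finite exact computation that the orbit of its cell is not an annulus---is the same as the paper's in Section~7. At that level the approaches agree.

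Two points deserve correction. First, the heuristic you use to select $\sigma$ (push $\sigma\geq 1$ so that $O_2$ leaves its half-plane and stops being a fixed point) is not the mechanism the paper exhibits. For $\theta=\frac18$ the paper takes $\sigma=\frac13<1$, where \emph{both} centers are genuine periodic points with octagonal cells $1^\omega$ and $2^\omega$; the ring-breaking there stems from the asymmetry of the two octagons (unequal sizes), not from the disappearance of a center. So your guiding principle is neither necessary nor the right explanation, even if it might also produce workable examples.

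Second, and more seriously, your verification scheme bakes in a specific failure mode: you plan to check that $C_k$ and $C_{k+1}=T_{\theta,\sigma}C_k$ share a full edge, so that the orbit is a connected chain with two loose ends. But $T_{\theta,\sigma}$ is a rotation on each piece, and there is no reason $T C_k$ should touch $C_k$ at all. In the paper's example $\theta=\frac14$, $\sigma=3$, the five cells of the orbit $(12211)^\omega$ are scattered squares that do not form a chain in your sense; the conclusion ``not an annulus'' holds for the blunter reason that their union is not even connected. Your ``leaner variant'' has the same defect: it presupposes a canonical ring ordering around $O_2$ along which one can speak of forced neighbours, and that is precisely the structure whose existence is being denied. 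The fix is simply to drop the chain hypothesis and, as the paper does, compute the finitely many polygons $C_k$ and observe directly that their union fails to be an annulus---which, as you note, is a finite decidable check over $\mathbb Q$, $\mathbb Q(\sqrt3)$, or $\mathbb Q(\sqrt2)$ depending on the angle.

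(A side remark: the paper's Section~7 actually treats only $\theta=\frac14$ and $\theta=\frac18$ explicitly; the case $\theta=\frac13$ is not argued there, so carrying your plan through for that angle would go beyond what the paper writes out.)
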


 \subsection{General method}
 We want to briefly explain the method used to prove that the language of the piecewise rotation is a substitutive language.
 Assume we find a subset $\mathfrak C\subset \mathbb R^2$ such that the first return map of $T$ to $\mathfrak C$ is well defined.  It is a piecewise isometry by definition. If this map is conjugated by an homeomorphism to $T$, then the language is substitutive. We refer to \cite{Bed.Ca.11} Lemma 31, for a proof of this fact. In all the following we apply this method. The set $\mathfrak C$ will be a cone with an angle which is a multiple of $\theta$. The vertex of the cone will be chosen carefully. We refer to the different Figures for the different choices depending on the angle. 
 
 To obtain the first return map to $\mathfrak C$, we give in the different Figures the sets $T^k\mathfrak C, k=1,2\dots$ until they intersect $\mathfrak C$ again. Remark that we draw in dashed lines the intersections of $\mathfrak C$ with its iterates by $T$.
\section{Angle $\theta=\frac{1}{4}$}
The aim of this section is to describe the map for a non zero parameter $\sigma$. Equation $(1)$ shows that the two centers of rotations are given by complex numbers
$\frac{\sigma+1}{2}(-1+i), \frac{\sigma-1}{2}(-1+i).$ As explained in the discussion after Equation (\ref{centres}) we can restrict to the case $\sigma>0$.
If $0<\sigma<1$, then the two points $0^\omega$ and $1^\omega$ are periodic elements of the subshift, otherwise only $0^\omega$ exists.

\subsection{Case $\sigma< 1$}
We will show that the symbolic dynamics are the same for all maps with $0<\sigma<1$. The cells corresponding to the words $0^\omega$ and $1^\omega$ are two squares of different lengths $\sigma+1$ and $1-\sigma$, centered around the two centers of rotations. Thus, we can define a cone $\mathfrak C$ which has a support on one edge of the square and on the discontinuity. The vertex of the cone is a vertex of the square on the discontinuity line.
 We consider the first return map $\hat{T}$ to the cone, as in the symmetric cases, see right part of Figure \ref{nonsym-1} and \cite{Bed.Kab.14}.  This map has the following form defined on four pieces, see Figure \ref{nonsym-1} left part. We code this map with four letters $A, B, C$ and $D$.
 The link between the natural coding of $T$ and the coding of $\hat T$ is given by the morphism $\begin{array}{c|c|c|c|}A&B&C&D\\ \hline
 1221&12^31&12211 &12^31^2\end{array}.$

 \begin{proposition}
 There exists a substitution $\sigma_{4,s,0}$ defined over $\{A,B,C,D\}^*$ such that
 for every $\sigma\in (0,1)$, the language $L'$ of the dynamics of $\hat{T}$ is the set of factors of the periodic words of the form $z^\omega$ for $z\in Z$, where:
 $$Z=\bigcup_{n\in\mathbb{N}}\{\sigma_{4,s,0}^n(D), \sigma_{4,s,0}^n(C), \sigma_{4,s,0}^n(B) \}.$$ 
\end{proposition}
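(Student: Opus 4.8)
The plan is to exhibit the substitution $\sigma_{4,s,0}$ explicitly and then prove that it generates exactly the language $L'$ of the first return map $\hat{T}$, using a renormalization argument. The starting point is the description of $\hat{T}$ as a piecewise isometry on four pieces coded by $A,B,C,D$, which is already fixed by the discussion preceding the statement. First I would identify inside the cone $\mathfrak{C}$ a sub-cone $\mathfrak{C}'$, again with vertex on the discontinuity line and opening a multiple of $\theta=\tfrac14$, such that the first return map of $\hat{T}$ to $\mathfrak{C}'$ is well defined; the figures referenced in the section give the explicit iterates $\hat{T}^k\mathfrak{C}'$ until they meet $\mathfrak{C}'$ again, and from these one reads off that this second return map is again a four-piece piecewise isometry of the \emph{same} combinatorial type as $\hat{T}$. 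The key point is that the conjugating homeomorphism between $\hat{T}\restriction\mathfrak{C}'$ and $\hat{T}\restriction\mathfrak{C}$ is the scaling/rotation identifying $\mathfrak{C}'$ with $\mathfrak{C}$, so $\hat{T}$ is self-conjugate under renormalization; this is exactly the hypothesis needed to apply \cite{Bed.Ca.11}, Lemma 31.

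The substitution $\sigma_{4,s,0}$ is then \emph{defined} to be the one that reads off how a return word to $\mathfrak{C}$ decomposes into return words to $\mathfrak{C}'$: the image $\sigma_{4,s,0}(X)$ of a letter $X\in\{A,B,C,D\}$ is the word in $\{A,B,C,D\}^*$ recording the successive pieces visited by $\hat{T}$ between two consecutive returns to $\mathfrak{C}'$, where that excursion corresponds to a single application of the renormalized map coded by $X$. Concretely I would compute the itinerary of each of the four pieces of $\mathfrak{C}'$ under $\hat{T}$ until the return to $\mathfrak{C}'$, obtaining four explicit words; these four words are the columns of the array defining $\sigma_{4,s,0}$. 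The independence of $\sigma\in(0,1)$ is built in here: the combinatorics of the cone and its iterates do not change as long as $0<\sigma<1$ (only the two square side lengths $1+\sigma$ and $1-\sigma$ vary, and the cone was chosen resting on one square edge), so the itineraries, hence the substitution, are the same for all such $\sigma$.

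With $\sigma_{4,s,0}$ in hand, the two inclusions follow. For "$\supseteq$", I observe that the three periodic words $\sigma_{4,s,0}^n(B)$, $\sigma_{4,s,0}^n(C)$, $\sigma_{4,s,0}^n(D)$ are, by construction, the codings of the periodic orbits surrounding the three relevant centers at the $n$-th renormalization level (equivalently, the boundary orbits of the corresponding periodic cells at scale $n$); each is realized by an honest orbit of $\hat{T}$, so all their factors lie in $L'$. For "$\subseteq$", take any finite word $w\in L'$, realized along the orbit of some point $z\in\mathfrak{C}$. If the orbit of $z$ never returns to $\mathfrak{C}'$, then $z$ lies in one of finitely many pieces on which $\hat{T}$ is a finite-order rotation whose coding is exactly one of the base periodic words $B$, $C$, $D$ (the $n=0$ case), and we are done. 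Otherwise, desubstituting by $\sigma_{4,s,0}$ replaces $w$ by a shorter word $w'\in L'$ read along the orbit of the renormalized point; iterating this desubstitution terminates after finitely many steps — because lengths strictly decrease — in a word lying in some $\sigma_{4,s,0}^n(X)$, whence $w$ is a factor of $\sigma_{4,s,0}^{n+k}(X)$ for suitable $X\in\{B,C,D\}$. The bookkeeping must be done carefully near the cone's vertex and along its boundary edges, where a word might straddle a renormalization boundary; handling these boundary factors — showing they too are captured as factors of the $z^\omega$ — is the step I expect to be the main obstacle, and it is resolved exactly as in \cite{Bed.Ca.11} by enlarging the finite set of "exceptional" short words absorbed at level $n=0$.
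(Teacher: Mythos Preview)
Your approach is essentially the paper's, and it is correct. The paper's proof is terser but does exactly what you describe: it induces $\hat T$ on a sub-cone, checks that the induced map is conjugate to $\hat T$, reads off the substitution from the return words, and identifies the complement of the orbit of the sub-cone as the union of the periodic cells $B^\omega,C^\omega,D^\omega$.

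Two small sharpenings are worth making. First, you leave $\mathfrak{C}'$ unspecified; in the paper the sub-cone is simply the piece $A$ itself (the leftmost piece of the partition of $\mathfrak C$), and this is what makes the computation immediate. Second, for $\theta=\tfrac14$ the cone $\mathfrak C$ is a right-angle sector, so the piece $A$ is a translated copy of $\mathfrak C$ and the conjugacy is a pure translation, not a scaling/rotation; there is no contraction factor here (unlike the $\theta=\tfrac18$ case). With these identifications the return words are read off directly and the substitution is
\[
\sigma_{4,s,0}:\quad A\mapsto A,\;\; B\mapsto AB,\;\; C\mapsto AC,\;\; D\mapsto ABC,
\]
which is manifestly independent of $\sigma\in(0,1)$, as you anticipated. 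Your more careful two-inclusion argument for $L'$ is fine and in fact more detailed than what the paper writes; the ``boundary'' bookkeeping you flag as a potential obstacle does not arise here, since the desubstitution is exact (every $\hat T$-orbit either stays forever in the three periodic cells $B,C,D$ or enters $A$, and once in $A$ the induced dynamics is literally the same map up to translation).
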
 
 \begin{proof}
First, we compute the first return map of $\hat{T}$ to $A$. A simple computation shows that it is conjugated to $\hat{T}$.
Thus, the return words of $A,B,C,D$ indicate us that the dynamics is given by the substitution 
 $$\sigma_{4,s,0}:\begin{array}{c|c|c|c|} A&B&C&D\\ \hline  A&AB&AC&ABC\end{array}$$
 
The orbit of $A$ under $\hat{T}$ does not cover all the cone $\mathfrak C$. But, the pieces not covered correspond exactly to the cells of the periodic words $D^\omega, C^\omega$ and $B^\omega$. We deduce that the periodic points are images under the substitution of $D^\omega, C^\omega$ and $B^\omega$. 
 \end{proof}
As a corollary of the proof, since the substitution does not depend on $\sigma$, we deduce that the symbolic dynamics is the same for all values of $\sigma\in (0,1)$.
 
Remark that the case of a symmetric map is just a degenerated case where the periodic cells become squares instead of rectangles, see \cite{Bed.Kab.14} for a comparison.
 
 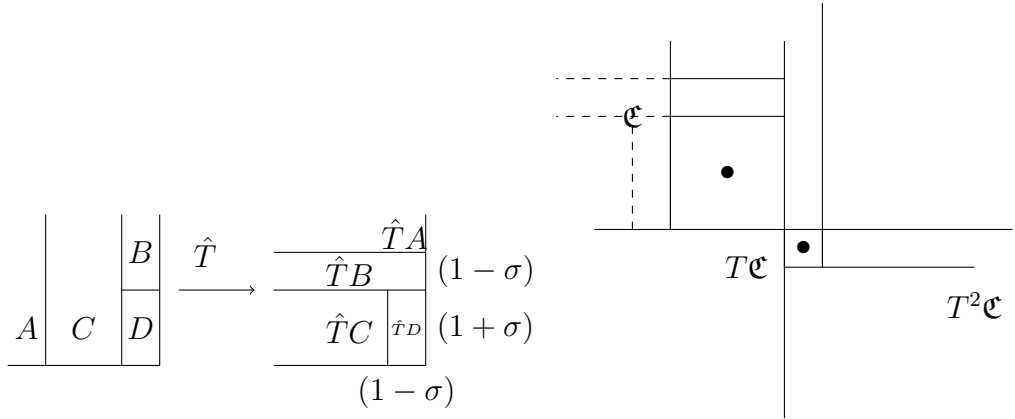
\begin{figure}[h]
 \begin{center}
\begin{tikzpicture}[scale=.5]
 \draw (-4,0)--(0,0)--(0,4);
 \draw (-3,0)--(-3,4);
 \draw (-1,0)--(-1,4);
 \draw (-1,2)--(0,2);
 \draw[->] (0.5,2)--(2.5,2);
 \draw (-.5,1) node{$D$};
 \draw (-2,1) node{$C$};
 \draw (-.5,3) node{$B$};
 \draw (-3.5,1) node{$A$};
 \draw (7,1) node[right]{$(1+\sigma)$};
\draw (7,2.5) node[right]{$(1-\sigma)$}; 
\draw (6.5,0) node[below]{$(1-\sigma)$};
\draw (6.5,1) node{\tiny$\hat T D$};
\draw (5,1) node{$\hat TC$};
\draw (5,2.5) node{$\hat TB$}; 
\draw (6.45,3.5) node{$\hat TA$};
\draw (3,0)--(7,0)--(7,4);
\draw (3,2)--(7,2);
\draw (3,3)--(7,3);
\draw (6,0)--(6,2);
\draw (1.2,3) node{$\hat T$};
\end{tikzpicture}
\begin{tikzpicture}[scale=.5]
\draw (-8,0)--(3,0);
\draw[dashed] (-7,0)--(-7,3);
\draw[dashed] (-6,3)--++(-3,0);
\draw[] (-3,4)--++(-3,0);
\draw[dashed] (-6,4)--++(-3,0);
\draw (-2,-1)--++(0,7);
\draw (-2,-1)--++(4,0);
\draw (-6,0)--(-6,3)--(-3,3);

\draw (-3,0)--(-3,-1)--(-2,-1)--(-2,0);
\draw (-4.5,1.5)node{$\bullet$};
\draw (-2.5,-.5)node{$\bullet$}; 
\draw (-6,0)--(-6,5);
\draw (-3,-5)--(-3,5);
\draw (-7,3)node{$\mathfrak{C}$}; 
\draw (-4,-1)node{$T\mathfrak{C}$}; 
\draw (2,-2) node{$T^2\mathfrak C$};
\end{tikzpicture}

\caption{The first return map for $\theta=\frac{1}{4}$ and $\sigma<1$. On the right part, the two centers of rotation are the black points. On the left part the description of $\hat{T}$.}\label{nonsym-1}
\end{center}
\end{figure}
 
 \subsection{Case $\sigma=1$}
 The proof is close to the previous one. The cells $B$ and $D$ just vanish :
 there exists an unique fixed point for $T$. The cell associated to it is a square of size $2$. 
 We consider the induction $\hat T$, as previously. Here the map is defined on two subsets coded by $A$ and $C$ (we keep these notations to be coherent with the previous case). This map is clearly self similar if we look at the first return map to $A$. The substitution is given by 
  $\sigma_{4,s,0}:\begin{array}{c|c|} A&C\\ \hline  A&AC\end{array}.$ Thus, we have proved the following result:
 
 \begin{proposition}
 The language is described by the words $\sigma_{4,s,0}^n(C^\omega), n\in\mathbb N$.
 \end{proposition}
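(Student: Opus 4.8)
The plan is to reproduce the argument of the preceding proposition, keeping track of how it degenerates as $\sigma\to 1$. First I would fix the cone $\mathfrak C$ exactly as in the case $\sigma<1$: its vertex is the corner, lying on the discontinuity line $\mathbb R$, of the fixed square (now of side $2$), and its two sides run along one edge of that square and along $\mathbb R$. When $\sigma=1$ the two centers of rotation coincide with the unique fixed point, so the cells $B$ and $D$ of the subshift collapse and the induced map $\hat T$ on $\mathfrak C$ has only the two continuity pieces $A$ and $C$ (the labels being retained from the non-degenerate case). Concretely I would compute the successive images $T\mathfrak C,T^2\mathfrak C,\dots$ and record the first return of each to $\mathfrak C$; the resulting picture is the $\sigma=1$ specialization of Figure~\ref{nonsym-1} with the $B$- and $D$-strips removed.

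Next I would establish the self-similarity. As before, I compute the first return map of $\hat T$ to the sub-piece $A$ and check that it coincides, up to an explicit homothety, with $\hat T$ itself. Reading the return words of $A$ and $C$ under this induction gives the substitution
$$\sigma_{4,s,0}:\begin{array}{c|c|} A&C\\ \hline A&AC\end{array},$$
which is precisely the restriction to $\{A,C\}$ of the substitution of the case $\sigma<1$, explaining why the same name is used. Since the conjugacy between the first return map to $A$ and $\hat T$ is an affine map, \cite{Bed.Ca.11} Lemma~31 applies and shows that the language of $\hat T$ — and hence, via the morphism relating the codings of $\hat T$ and $T$, the language of $T_{1/4,1}$ — is substitutive and governed by $\sigma_{4,s,0}$.

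Finally I would identify the generating periodic words. The forward orbit of $A$ under $\hat T$ does not exhaust $\mathfrak C$; in the case $\sigma<1$ the leftover region was the union of the cells of $B^\omega$, $C^\omega$ and $D^\omega$, but here only the cell of $C^\omega$ survives the degeneration. Consequently every periodic cell of $\hat T$ is the image, under some power of $\sigma_{4,s,0}$, of the cell of $C^\omega$, so the language of $\hat T$ is the set of factors of $\{\sigma_{4,s,0}^n(C^\omega):n\in\mathbb N\}$; note that since $\sigma_{4,s,0}^n(C)=A^nC$ this family also accounts, in the limit, for the fixed word $A^\omega$. Pulling this back through the coding morphism yields the stated description of the language.

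I expect the only real work to be the geometric bookkeeping of the first return map: one must verify that for $\sigma=1$ the iterates $T^k\mathfrak C$ cover $\mathfrak C$ exactly as in the $\sigma<1$ picture with the $B$- and $D$-strips deleted, and that inducing on $A$ produces a genuine affine rescaling of $\hat T$ with no spurious new discontinuity. Everything after that is the routine transcription of the previous proof.
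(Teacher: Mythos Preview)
Your approach is essentially the paper's own: degenerate the $\sigma<1$ picture, observe that only the pieces $A$ and $C$ survive, induce on $A$ to obtain the self-similarity and the substitution $A\mapsto A$, $C\mapsto AC$, and conclude that only the periodic cell $C^\omega$ remains outside the orbit of $A$. One factual slip to correct: when $\sigma=1$ the two centers do \emph{not} coincide---rather $z_2=0$ lands on the discontinuity line while $z_1=-1+i$ stays in the upper half-plane---and the reason $B$ and $D$ collapse is simply that their widths $1-\sigma$ shrink to zero, not that the centers merge.
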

 
\begin{figure}[h]

\begin{tikzpicture}[scale=.5]
 \draw (-4,0)--(-1,0)--(-1,4);
 \draw (-3,0)--(-3,4);
 \draw (-1,0)--(-1,4);
 \draw (-2,1) node{$C$};
  \draw (-3.5,1) node{$A$};
\draw (5,1) node{$\hat T C$};
\draw (6.4,3.5) node{$\hat TA$};
\draw (3,0)--(7,0)--(7,4);
\draw (3,2)--(7,2);
\draw[->] (0,2)--(2,2);
\draw (1,2.5) node{$\hat{T}$};
\end{tikzpicture}
\begin{tikzpicture}[scale=1]
\draw (-4,0)--(4,0);
\draw (-1,0)--(0,0)--(0,1)--(-1,1)--cycle;
\draw (-1,0)--++(0,2);
\draw (0,0)--++(0,-2);
\draw (0,0)--++(0,2);
\draw[dashed] (0,1)--++(-4,0);
\draw (0,0)node{$\bullet$};
\draw (-.5,.5)node{$\bullet$};
\draw (-2,-1) node{$T\mathfrak C$};
\draw (2,-1) node{$T^2\mathfrak C$};
\draw (2,1) node{$T^3\mathfrak C$};
\end{tikzpicture}
\caption{The first return map for $\theta=\frac{1}{4}$ and $\sigma=1$. On the right part, the orbit of $\mathfrak C$ under $T$.}

\end{figure}
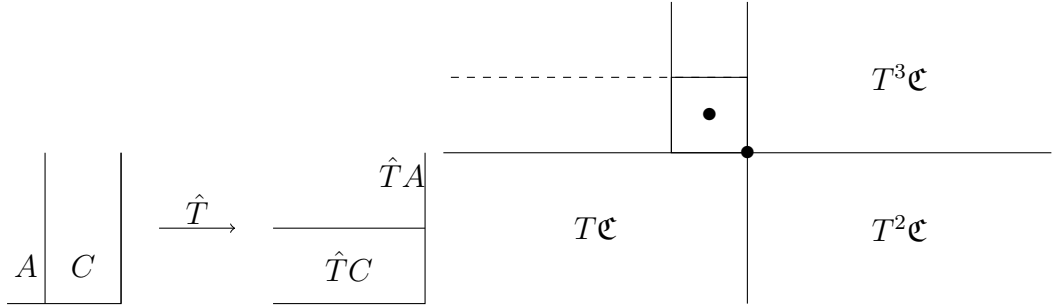

 \subsection{Case $\sigma>1$}
 The map $T$ has only one fixed point.
The infinite word $1^\omega$ is a periodic element of the subshift, the associated cell is a square. 

\begin{proposition}\label{prop14+1}
For every $\sigma\in (1,\infty)$, the language $L$ of the dynamics of $T$ is substitutive. 
\end{proposition}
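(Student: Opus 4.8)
\emph{Proof idea.} The plan is to renormalize exactly as in the cases $\sigma\le 1$. Since $\theta=\tfrac14$ the underlying rotation is by $\tfrac{\pi}{2}$, and for $\sigma>1$ the map $T=T_{1/4,\sigma}$ has the single fixed point $O_1=\tfrac{\sigma+1}{2}(-1+i)$, whose cell is an axis-parallel square $\mathcal S$ of side $\sigma+1$ with one edge lying on the discontinuity line $\mathbb R$. First I would introduce the cone $\mathfrak C$ of opening $\tfrac{\pi}{2}$ (a multiple of the angle) whose vertex is the corner of $\mathcal S$ on $\mathbb R$, one side carried by an edge of $\mathcal S$ and the other by $\mathbb R$, just as in the right-hand picture of Figure~\ref{nonsym-1}. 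Because the opening of $\mathfrak C$ is commensurable with $\theta$, the first return map $\hat T$ of $T$ to $\mathfrak C$ is again a bijective piecewise isometry with finitely many convex atoms.

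Next I would compute the iterates $T\mathfrak C, T^2\mathfrak C,\dots$ up to the first exponent $N$ with $T^N\mathfrak C\cap\mathfrak C\neq\emptyset$ (the content of the figure for this case), read off $\hat T$ and its atoms, and record the morphism $\psi$ sending the letter of each atom to the finite word over $\{1,2\}$ traced by the corresponding $T$-excursion — the analogue of the dictionary $A,B,C,D\mapsto 1221,\ 12^31,\ 12211,\ 12^31^2$ used when $\sigma<1$. The crucial step is then to exhibit an explicit homeomorphism $h$ of $\mathfrak C$ — piecewise a rotation–homothety, glued across the atom boundaries — conjugating $\hat T$ either to $T$ itself (a self-similar situation, as for $\sigma=1$) or to a piecewise rotation $T_{1/4,\sigma'}$ with $\sigma'\in(0,1]$. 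In the first alternative the self-similarity yields a substitution directly; in the second, the Propositions already proved give that the language of $T_{1/4,\sigma'}$, hence of $\hat T$, is substitutive. In either case, invoking \cite{Bed.Ca.11} (Lemma~31) through the morphism $\psi$ shows that the language $L$ of $T$ is a substitutive language, which is the assertion. As before, one also checks that the $\hat T$-orbit of the distinguished (largest) atom leaves uncovered precisely the periodic cells contained in $\mathfrak C$, so that every infinite word of the subshift is a factor of an iterated substitution image of a single letter.

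The main obstacle is the geometry of the two middle steps: determining the return time $N$ and the exact combinatorial layout of the atoms of $\hat T$ inside $\mathfrak C$, and — most delicately — verifying that the candidate conjugacy $h$ is a genuine homeomorphism (piecewise affinity is automatic; what must be established is continuity of $h$ and of $h^{-1}$ across the shared boundaries of the atoms), so that Lemma~31 of \cite{Bed.Ca.11} truly applies. A secondary point to watch is the dependence on $\sigma$: one expects, as on the interval $(0,1)$, that the resulting substitution is independent of $\sigma\in(1,\infty)$, so that all these maps share the same symbolic dynamics; this should follow from the fact that $h$, the atoms and $N$ depend on $\sigma$ only through an overall affine normalization of $\mathfrak C$.
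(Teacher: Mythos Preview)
Your overall architecture is right (induce on the cone $\mathfrak C$, then renormalize inside it), but two concrete expectations are wrong, and they matter.

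First, the $\sigma$-independence you predict at the end is false. For $\sigma>1$ the first return map $\hat T$ to $\mathfrak C$ is defined on three pieces $A,B,C$ (not four as for $\sigma<1$), and the next renormalization --- the first return $\hat T_A$ of $\hat T$ to the piece $A$ --- has atoms $A_1,A_2,A_3$ whose sizes and return words depend on the integer $n$ with $2n+1<\sigma<2n+3$: the recoding morphism is $A_1\mapsto A$, $A_2\mapsto ACB^{\,n-1}$, $A_3\mapsto ACB^{\,n}$. Only after this $n$-dependent step does one get a genuinely self-similar map (first return of $\hat T_A$ to $A_1$, with substitution $A_1\mapsto A_1$, $A_2\mapsto A_1A_2$, $A_3\mapsto A_1A_3$). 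So the description of the language involves a countable family of morphisms indexed by $\bigl[\tfrac{\sigma-1}{2}\bigr]$, not a single substitution, and your ``overall affine normalization'' argument cannot hold.

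Second, the specific conjugacy you propose --- $\hat T$ conjugate to $T$ itself or to some $T_{1/4,\sigma'}$ with $\sigma'\in(0,1]$ --- does not occur. The map $\hat T$ lives on a quarter-plane and has three atoms, whereas the $\sigma\le 1$ induced maps have four (resp.\ two) atoms with a different combinatorial layout; there is no homeomorphism matching them. What actually happens is a \emph{two}-level induction: $T\to\hat T\to\hat T_A$, and it is $\hat T_A$ that is self-similar. Likewise, the periodic cells left uncovered by the orbit of $A$ under $\hat T$ are coded by words of the form $(BC^k)^\omega$ whose list again depends on $n$. Your outline would go through once you insert this intermediate, $n$-dependent step and drop the uniformity claim.
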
 
\begin{proof}
We consider the first return map $\hat{T}$ to the same cone as in the previous case. This map has the following form defined on three pieces, see Figure  \ref{nonsym+1} left part. Consider the first return map to $A$ for $\hat T$, and denote it $\hat T_A$. Assume we have $2n+1<\sigma<2n+3$ for some integer $n$. A simple computation shows that the map has the form given in Figure \ref{retret}. This map is defined on three subsets. The sets $A_2, A_3$ have for sizes $2n+3-\sigma, \sigma-2n+1$.
The return words depend on the parameter $\sigma$.  Then we obtain the morphism
$$\eta:\begin{array}{c|c|c}
A_1&A_2&A_3\\
\hline
A&ACB^{n-1}&ACB^{n}\\
\end{array}$$

Considering its first return map to $A_1$, we see this map is self-similar. The substitution is given by the morphism
$$\theta:\begin{array}{c|c|c}
A_1&A_2&A_3\\
\hline
A_1&A_1A_2&A_1A_3\\
\end{array}$$

 The orbit of the set $A_1$ under $\hat{T_A}$ does not cover all the piece labeled $A$. It remains several periodic cells. We deduce the language of the coding of $\hat{T_A}$. Here again, the orbit of $A$ under $\hat{T}$ does not cover all $\mathfrak C$. It remains a finite number of rectangles which correspond to the cells of periodic words. These words depend on $n$ and are of the form $(BC^k)^\omega, k\in\mathbb N$.
\end{proof}

\begin{remark}
First of all remark that several substitutions are needed in order to describe the language. Remark also that the substitutions involved in the description of the dynamics depends only on the value of $[\frac{\sigma-1}{2}]$.
\end{remark}
\begin{example}
For $\sigma=4$,  the orbit of the set $A_1$ under $\hat{T_A}$ does not cover all the piece named $A$. It remains several periodic cells coded by $A_2^\omega, (A_2A_3)^\omega$. Then we use the renormalisation to obtain the language of $\hat T_A$. Then we use the recoding to obtain the following result: the language of $\hat T$ is the set of factors of the periodic words of the form $z^\omega$ for $z\in Z$, where:

$$Z=\displaystyle\bigcup_{k\in\mathbb N}\{\eta\circ\theta (A_2)^k, \eta\circ\theta(A_2A_3)^k, (BC)^k, (BBC)^k, (BBBC)^k\}$$
\end{example}

\begin{figure}
\begin{center}
 \begin{tikzpicture}[scale=.5]
 \draw (-4,0)--(0,0)--(0,4);
 \draw (-4,1)--(0,1);
 \draw (-2,1)--(-2,4);
 \draw[->] (0.5,2)--(2.5,2);
 \draw (-.5,1.5) node{$C$};
 \draw (-1.5,.5) node{$B$};
 \draw (-3.5,1.5) node{$A$};
 \draw (7,1) node[right]{$2$};
 \draw (7,2.5) node[right]{$(\sigma-1)$};  
 \draw (5,1) node{$\hat T C$};
 \draw (5,2.5) node{$\hat T B$}; 
 \draw (6.3,3.5) node{$\hat T A$};
 \draw (3,0)--(7,0)--(7,4);
 \draw (3,2)--(7,2);
 \draw (3,3)--(7,3);
 \end{tikzpicture}
 \begin{tikzpicture}[scale=.25]
 \draw (-10,0)--(5,0);
 \draw (-6,0)--(-6,4)--(-2,4)--(-2,0);
 \draw (-4,2)node{$\bullet$};
  \draw (-3,1)node{$\bullet$};
  \draw (-6,0)--(-6,10);
  \draw (-2,0)--(-2,-10);
  \draw (-2,2)--(5,2);
  \draw (-2,0)--(-2,10);
  \draw (-4,4)--(-4,10);
  \draw[dashed] (-6,4)--++(-4,0);
    \draw[dashed] (-6,2)--++(-4,0);
  \draw (-7,2) node{$\mathfrak C$};
    \draw (-4,-2) node{$T\mathfrak C$};
 \end{tikzpicture}
\caption{The first return map for $\theta=\frac{1}{4}$ and $\sigma>1$.}\label{nonsym+1}
\end{center}
\end{figure}
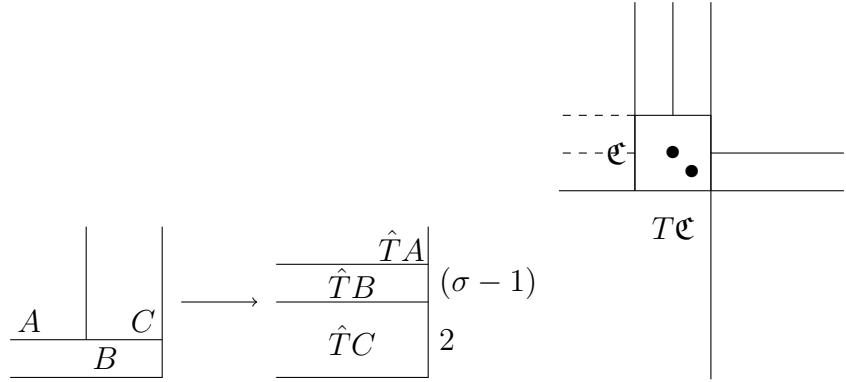

\begin{figure}
\begin{center}
 \begin{tikzpicture}[scale=.5]
 \draw (-4,0)--(0,0)--(0,4);
 \draw (-1,0)--(-1,4);
 \draw (-2,0)--(-2,4); 
 \draw (-2,1)--(-2,4);
 \draw[->] (0.5,2)--(2.5,2);
 \draw (-.5,.5) node{$A_3$};
 \draw (-1.5,.5) node{$A_2$};
 \draw (-3.5,.5) node{$A_1$};
\draw (5,2.5) node{$\tiny\hat T_A A_1$};
 \draw (5,1.5) node{$\tiny\hat T_A A_2$}; 
 \draw (5,.5) node{$\tiny\hat T_A A_3$};
 \draw (3,0)--(7,0)--(7,4);
 \draw (3,2)--(7,2);
 \draw (3,1)--(7,1);
 \draw (1,2) node[below]{$\hat T_A$};
 \end{tikzpicture}
\begin{tikzpicture}[scale=.5]
\fill[gray] (-4,0)--(-2,0)--(-2,3)--(0,3)--(0,5)--(-4,5)--cycle;
\draw (-2,0)--(0,0)--(0,3);
\draw (0,2)--(-2,2);
\draw (-1,0) node[below]{$2$};
\draw (0,1) node[right]{$2$};
\draw (0,2.5) node[right]{$\sigma-1$};
\end{tikzpicture}
 
\caption{On the left: The first return map of $\hat T$ to $A$ for $\theta=\frac{1}{4}$ and $\sigma>1$. On the right part in gray, the orbit of the piece $A$ under $\hat T$ in order to compute the first return map.}\label{retret}
\end{center}
\end{figure}
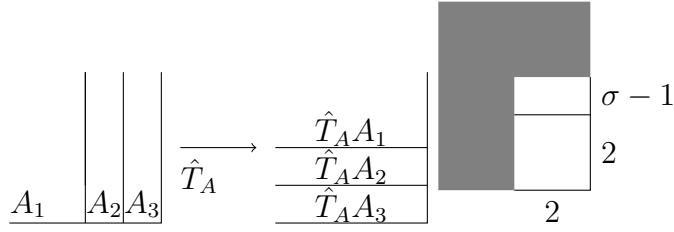

\section{Angle $\theta=\frac{1}{3}$}
\subsection{Calculus}
The centers of the rotations, due to Equation $(1)$, have the form 
$$z_{2}=-\frac{1}{\sqrt 3}e^{-i\pi/6}(\sigma-1), z_{1}=-\frac{1}{\sqrt 3}e^{-i\pi/6}(\sigma+1).$$ 
Thus, these points $O_1, O_2$ are on a line passing through the origin with an angle of $-\frac{\pi}{6}$. The $x$ coordinates of these points are $-\frac{\sigma-1}{2}, -\frac{\sigma+1}{2}$. We deduce that these two points are inside the half-plane $Im(z)>0$ if and only if $\sigma>1$. Remark also that $|z_{1}-z_{2}|=\frac{2}{\sqrt 3}$. 

\subsection{Case $\sigma>1$}
The point $O_1$ is the center of an equilateral triangle of side $\sigma+1$, with two vertices on the real line ($O$ and $z=-(\sigma+1)$). 
The point $O_2$ belongs to the segment $[O,O_1]$. We consider $\mathfrak C$ the cone of angle $\frac{2\pi}{3}$, which has for center the other vertex of the triangle on the real line. We compute the first return map to $\mathfrak C$. It is given by Figure \ref{return13}. The link with the natural coding is given by
$\begin{array}{c|c|c|} A&B&C\\ \hline 1221&121&122\end{array}.$

\begin{figure}
\begin{center}
\begin{tikzpicture}[scale=.25]
\draw (-10,0)--(0,0)--++(60:7);
\draw (-10,0)--(0,0)--++(60:2)--++(120:5);
\draw (-10,0)--(0,0)--++(60:2)--++(120:2)--++(60:5);

\draw[->] (3,1)--++(2,0);
\draw (4,2) node{\tiny$\hat T$};
\draw (-3,1) node{$B$};
\draw (0,5) node{\tiny$C$};
\draw (1,4) node{\tiny$A$};

\draw (11,1) node{$\hat TA$};
\draw (11,3) node{\tiny$\hat T B$};
\draw (17,7) node{\tiny$\hat TC$};

\draw (5,0)--(15,0)--++(60:2)--++(-7,0);
\draw (5,0)--(15,0)--++(60:5)--++(120:5);
\draw (5,0)--(15,0)--++(60:7);
\end{tikzpicture}
\begin{tikzpicture}[scale=.25]
\draw (-10,0)--(5,0);
\draw (-5,0)--(0,0)--++(120:5)--++(240:5);
\draw (-5,0)--++(60:10);
\draw (-3,0)--++(-120:5);
\draw (0,0)--++(120:-5);
\draw (2,0)--++(120:-5);
\draw (0,0)--++(120:3)--++(60:5);
\draw[dashed] (-2.5,4.3)--++(120:6);
\draw[dashed] (-5,0)--++(60:2)--++(-7,0);
\end{tikzpicture}
\caption{On the left part: the first return map for $\theta=\frac{1}{3}$ and $\sigma>1$. On the right part, how to find it by iterations of $T_{\theta,\sigma}$}\label{return13}
\end{center}
\end{figure}
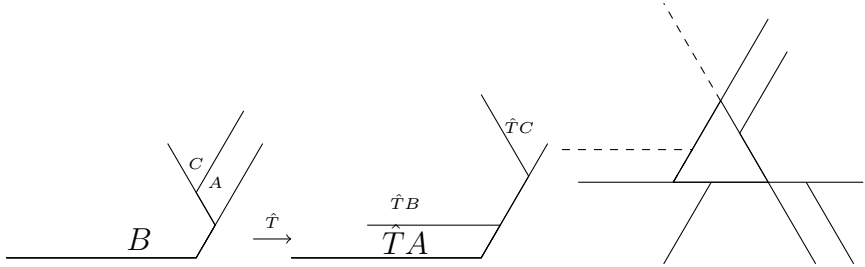

\begin{proposition}
Every point of $\mathfrak C$ has a periodic orbit under $\hat T$. The associated periodic cell is either an equilateral triangle or an hexagon. The cone $\mathfrak C$ is tiled by these two polygons.
\end{proposition}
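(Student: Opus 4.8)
The plan is to compute $\hat T$ explicitly on its three pieces, to exploit that $\theta=\frac13$ makes every rotation involved of order $3$ — so that $\hat T$ is a translation on two of the pieces and a single $\frac{2\pi}{3}$-rotation on the third — and then to show that the resulting orbits necessarily close up into polygonal periodic cells filling $\mathfrak C$.

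\emph{Step 1 (the form of $\hat T$).} From the return words $1221$, $121$, $122$ attached to $A,B,C$ in Figure~\ref{return13}, a direct computation using $1+e^{2i\pi/3}+e^{4i\pi/3}=0$ shows that $\hat T$ is a \emph{translation} on each of $B$ and $C$ (the composed linear part being $e^{2i\pi}=1$), by two fixed vectors, and a \emph{rotation by $\tfrac{2\pi}{3}$} about some point $z_0$ on $A$ (return time $4$, linear part $e^{8i\pi/3}=e^{2i\pi/3}$). Moreover $\hat T$ fixes the vertex $v$ of $\mathfrak C$, which is a vertex of the equilateral triangle $\Delta$ of side $\sigma+1$ centred at $O_1$; since $\Delta\subset\mathfrak C$ and $T(\Delta)=\Delta$, the map $\hat T$ coincides with $T$ on $\Delta$ and acts there as the order-$3$ rotation about $O_1$, so $\Delta$ is already one periodic cell, the prototype of the ``triangle'' alternative. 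It remains to handle $\mathfrak C\setminus\Delta$.

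\emph{Step 2 (orbits close up; the two cell types).} The picture of $\hat T$ has a self-similar structure: the $\tfrac{2\pi}{3}$-rotation on $A$ carries the two translation directions of $\hat T|_B$ and $\hat T|_C$ into each other, so the discontinuity graph of $\hat T$ inside $\mathfrak C$ is, outside a bounded neighbourhood $N$ of $v$, invariant under the triangular lattice $\Lambda$ those vectors generate. Hence away from $v$ the dynamics is that of a $\Lambda$-periodic piecewise isometry with order-$3$ linear parts: on each atom the third iterate is a $\Lambda$-translation of a $\Lambda$-periodic region, so every atom has a finite orbit, while the finitely many cells meeting $N$ are settled by direct inspection of the iterated images. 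In either regime a cell is a connected component of $\mathfrak C$ minus the (eventually $\Lambda$-periodic) discontinuity graph, hence a convex polygon, and the case analysis over the three pieces shows that exactly two shapes occur: equilateral triangles (period $3$: copies of $\Delta$ together with the small triangle around $z_0$, where one checks $\hat T|_A^3=\mathrm{id}$) and hexagons (unions of six small triangles around an interior point of a periodic orbit). Since every point of $\mathfrak C$ is thereby $\hat T$-periodic, these cells cover $\mathfrak C$, and being cells of a bijective piecewise isometry their interiors are pairwise disjoint; so they tile $\mathfrak C$.

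The main obstacle is Step 2, namely excluding an aperiodic Cantor-type remainder: the delicate points are to verify that the discontinuity graph really becomes $\Lambda$-periodic once one is far enough from $v$, and that the bounded central block $N$ produces no additional irregular cell. This is precisely where $\theta=\frac13$ enters — the order-$3$ collapse forces the third iterate to be a lattice translation on each atom — and once it is in place, identifying the triangles and hexagons and checking that they tile $\mathfrak C$ is routine bookkeeping.
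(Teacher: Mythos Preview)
Your structural observation in Step~1 --- that $\hat T$ acts as a translation on each of $B$ and $C$ and as a rotation by $2\pi/3$ on $A$ --- is correct and is exactly the fact the paper uses. But the remainder of Step~1 contains an error: the triangle $\Delta$ of side $\sigma+1$ centred at $O_1$ is the cell of the word $1^\omega$ for $T$, and the cone $\mathfrak C$ is based at a \emph{vertex} of $\Delta$, with one edge along the real line and the other at angle $60^\circ$, opening \emph{away} from $\Delta$. Thus $\Delta\not\subset\mathfrak C$, and $\Delta$ is not one of the periodic cells of $\hat T$. The triangular cells that actually occur inside $\mathfrak C$ have sides $2n+3-\sigma$ and $\sigma-2n-1$ (for $2n+1<\sigma<2n+3$), not $\sigma+1$.

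Step~2 is where the real gap lies, and you have essentially diagnosed it yourself. The assertion that the discontinuity graph of $\hat T$ becomes $\Lambda$-periodic outside a bounded neighbourhood of $v$ is plausible but unproved: knowing that the rotation on $A$ permutes the two translation directions is not by itself enough, since one must also control how the regions $A$, $B$, $C$ and all their iterated preimages under $\hat T$ fit together. Establishing this $\Lambda$-periodicity rigorously amounts to exhibiting the tiling --- which is the very statement to be proved --- and your ``direct inspection'' of the central block $N$ is not carried out.

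The paper takes the direct route you are trying to avoid: it simply writes down the tiling of $\mathfrak C$ by two sizes of equilateral triangle and one hexagon (with alternating side lengths matching the two triangles), as in Figure~\ref{13pavage}, and then checks that $\hat T$ permutes the tiles. This check is immediate once one knows, as you correctly noted, that $\hat T|_B$ and $\hat T|_C$ are translations and $\hat T|_A$ is a $2\pi/3$-rotation. The paper's argument is less conceptual than your lattice heuristic, but it is complete, and for a fixed rational angle like $\theta=\tfrac13$ it is no more work than filling the gap in your Step~2 would be.
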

\begin{proof}
The proof follows the same method as the one of Proposition \ref{prop14+1}.
Assume $3<\sigma<5$ (the proof is similar if $2n+1<\sigma<2n+3$ for some integer $n$). Then we have $2<\sigma-1<4<\sigma+1$.
Consider two equilateral triangles of size $5-\sigma, \sigma-3$. Then consider an hexagon with sides two by two parallel with the same lengths as the triangles. We can tile $\mathfrak C$ with these three tiles, as made in Figure \ref{13pavage}. Now, it is easy to see that each polygon has a periodic orbit under $\hat T$. Remark that the restriction of $\hat T$ to $B$ and to $C$ is a translation, and that the restriction to $A$ is a rotation of angle $\frac{2\pi}{3}$. For example, the codings of the two marked hexagons are given by
$$\bullet=(BA)^\omega, \circ=(BBCA)^\omega$$

\begin{figure}
\begin{center}
\begin{tikzpicture}[scale=.35]
\draw (-10,0)--(0,0);
\draw (-.5,0)--++(120:10);
\draw (-2.5,0)--++(120:10);
\draw (-4.5,0)--++(120:10);

\draw (0,0)--++(60:2)--++(-10,0);
\draw (0,0)--++(60:4)--++(-10,0);
\draw (0,0)--++(60:6)--++(-10,0);
\draw (0,0)--++(60:8)--++(-10,0);

\draw (0,0)--++(60:1.5)--++(120:7);
\draw (0,0)--++(60:3.5)--++(120:7);
\draw (0,0)--++(60:5.5)--++(120:7);

\draw (-2,0)--++(60:10);
\draw (-4,0)--++(60:10);
\draw (-6,0)--++(60:10);
\draw (-8,0)--++(60:10);

\fill (-.5,1) circle(.2);
\fill (.5,3) circle(.2);

\draw (-2.5,1) circle(.2);
\draw (-1.3,2.2) circle(.2);
\draw (-.5,4.5) circle(.2);
\draw (1.5,4.5) circle(.2);
\end{tikzpicture}
\caption{Dynamics of $\hat T$ for $\theta=\frac{1}{3}$ and $\sigma>1$.}\label{13pavage}
\end{center}
\end{figure}
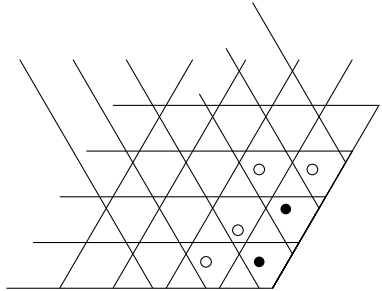
\end{proof}

\begin{remark}
The periodic tiling does not depend of the value of $\sigma$.
\end{remark}
\section{Angle $\theta=\frac{1}{8}$}

\subsection{Case $\sigma=1$}
\begin{proposition}\label{prop:s=1-octa}
The language of the piecewise rotation of angle $\frac{1}{8}$ and parameter $\sigma=1$ is substitutive.
\end{proposition}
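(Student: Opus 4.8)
The plan is to run the renormalization scheme of Section~3.2. With $\theta=\frac18$ the rotation angle is $2\pi\theta=\frac\pi4$, and Equation~(\ref{centres}) specialised at $\sigma=1$ gives $z_2=0$, so the center $O_2$ sits exactly on the discontinuity line (this is the degeneracy of $\sigma=1$), while $O_1=z_1=\frac{2e^{i\pi/4}}{1-e^{i\pi/4}}$ has imaginary part $\cot\frac\pi8=1+\sqrt2>0$. I would first check that the orbit of the origin under the branch $z\mapsto e^{i\pi/4}(z+2)$ is the set of vertices of a regular octagon centered at $O_1$, of edge length $2$, having the segment $[-2,0]$ as one edge on the real axis; this octagon is the periodic cell of the constant coding of the half-plane containing $O_1$, and it is the only periodic cell touching the discontinuity near the origin.

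I would then take $\mathfrak C$ to be the cone whose vertex is the origin (a vertex of that octagon), whose angle is a multiple of $\frac\pi4=2\pi\theta$, and which is supported on the real axis and on one edge of the octagon, exactly in the spirit of Figures~\ref{nonsym-1} and \ref{return13}. Next I would draw the iterates $T\mathfrak C,T^2\mathfrak C,\dots$ until the first return to $\mathfrak C$; because the lower-half-plane branch of $T$ is simply the rotation of angle $\frac\pi4$ about the origin and the upper branch is a rotation about $O_1$, only finitely many iterates occur and the picture is explicit. This produces the first return map $\hat T$ to $\mathfrak C$, a piecewise isometry on finitely many cone-shaped or polygonal pieces, which I would code on a finite alphabet and link to the natural $\{1,2\}$-coding of $T$ by an explicit morphism of the same type as the arrays $A\mapsto 1221$ appearing earlier in the paper.

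The core of the argument is to establish self-similarity: that the first return map of $\hat T$ to one distinguished piece --- a sub-cone with the same vertex and smaller angle --- is conjugate, by a homothety composed with a rotation, to $\hat T$ (or, after one further induction, to $T$). As in the $\theta=\frac14$, $\sigma>1$ case treated above, this may require iterating the induction twice and composing two substitutions; the part of $\mathfrak C$ not swept out by the orbit of the distinguished piece will consist of finitely many periodic cells (triangles, squares, or octagons) whose codes I would read off directly. Once the conjugacy is in hand, Lemma~31 of \cite{Bed.Ca.11} yields that the language of $T$ is substitutive, which is the assertion.

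I expect the main obstacle to be the geometric bookkeeping: correctly enumerating the pieces of $\hat T$ and the uncovered periodic cells, and above all verifying that the induced map on the distinguished sub-cone is genuinely conjugate to the whole original dynamics --- not merely an isometry on each piece separately. The coincidence $O_2=0$ also forces some care at the vertex of $\mathfrak C$, where the two branches of $T$ degenerate along the same point; one must check that the coding and the return-map structure are not disturbed there. Assembling the recoding morphism with the substitutions is then routine, though error-prone.
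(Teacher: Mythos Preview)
Your plan matches the paper's proof: take the cone $\mathfrak C$ of angle $\tfrac\pi4$ at the origin (which is indeed the vertex $O_2$ of the octagon), compute the first return map $\hat T$, and show self-similarity by inducing on one piece. Two small corrections to your expectations: the paper finds exactly four pieces $A,B,C,D$ with return words $12^41^k$ for $k=3,4,5,6$, a \emph{single} induction on $A$ already yields a conjugate of $\hat T$ (no second induction is needed), and the distinguished piece $A$ is not a sub-cone of smaller angle at the same vertex but rather the unbounded far end of $\mathfrak C$, i.e.\ a translated copy of the same cone.
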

\begin{proof}
In this case, one of the two centers of rotation is on the discontinuity line and is a vertex of the regular octagon which is the cell associated to the periodic word $1^\omega$.
Now, we consider the sector of angle $\frac{\pi}{4}$ with one boundary on the discontinuity line and one boundary supported by one edge of the octagon, see Figure \ref{fig:s=1}. Consider the first return map $\hat T$ of $T$ to this sector. 
A simple computation shows that $\hat{T}$ is given by the right part of Figure \ref{fig:s=1}. 
There are 4 return words: $A=12^41^3, B=12^41^4, C=12^41^5$ and $D=12^41^6$.
Now, we consider the first return map of $\hat{T}$ to the cell associated to $A$. This map is conjugated to $\hat{T}$.
The substitution is given by
$$\begin{array}{c|c|c|c}
A&B&C&D\\
\hline A&AB&ABB&ABBB\end{array}$$
\end{proof}

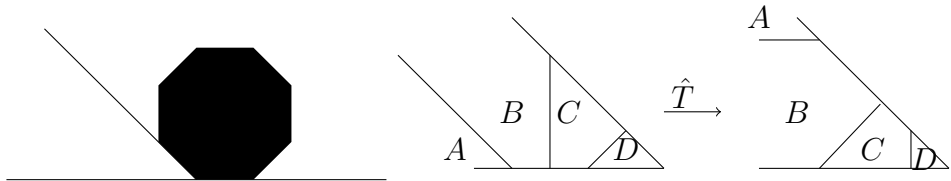
\begin{figure}[h]
\begin{center}
\begin{tikzpicture}[scale=.25]
\draw (-10,0)--(10,0);
\fill[]  (3,0)--(5,2)--(5,5)--(3,7)--(0,7)--(-2,5)--(-2,2)--(0,0);
\draw (0,0)--(-8,8);
\end{tikzpicture}
\begin{tikzpicture}[scale=.25]
\draw (20,0)--(30,0);
\draw (30,0)--(22,8);
\draw (28,2)--(28,0);
\draw (23.18,0)--(26.39,3.41);
\draw (23.18,6.81)--(20,6.81);

\draw[->] (15,3)--(18,3);
\draw (16,4) node{$\hat T$};

\draw (5,0)--(15,0);
\draw (15,0)--(7,8);
\draw (7,0)--(1,6);
\draw (11,0)--(13,2);
\draw (9,0)--(9,6);

\draw (4,1) node{$A$} ;
\draw (7,3) node{$B$} ;
\draw (10,3) node{$C$};
\draw (13,1) node{$\tiny D$};

\draw (20,8) node{$A$} ;
\draw (22,3) node{$B$} ;
\draw (26,1) node{$C$};
\draw (28.7,.5) node{$\tiny D$};

\end{tikzpicture}
\caption{ Piecewise rotation of angle $\frac{\pi}{4}$ and $\sigma=1$.}\label{fig:s=1}
\end{center}
\end{figure}

\subsection{Case $\sigma>1$}

We study the example $\sigma=3$. It corresponds to the case where the two centers are on the same half-plane. The point $O_2$ is at the center of a regular octagon, and the second one, $O_1$, is a vertex of this octagon such that it is the center of a two times bigger octagon, see Figure \ref{fig-2octa}. Remark that the image by the rotation of center $O_1$ of the vertex $A$ is the point $B$ such that the middle of $[AB]$ is a vertex of the small octagon. The cone $\mathfrak C$ is the same as in the other cases. We consider $\hat T$ the first return map of $T$ to this set.
\begin{proposition}
The language of the piecewise rotation of angle $\frac{1}{8}$ and parameter $\sigma=3$ is substitutive.
\end{proposition}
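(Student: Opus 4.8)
The plan is to follow the same renormalization scheme that proved Propositions \ref{prop:s=1-octa} and \ref{prop14+1}, now applied to the cone $\mathfrak C$ described just before the statement, whose vertex sits on the discontinuity line and whose angle is $\frac{\pi}{4}$, with one side supported by an edge of the big octagon centered at $O_1$. First I would compute the first return map $\hat T$ of $T_{1/8,3}$ to $\mathfrak C$ by explicitly tracking the sets $T^k\mathfrak C$ for $k=1,2,\dots$ until they return to $\mathfrak C$ (the iterates to be drawn in Figure \ref{fig-2octa}, with the dashed lines recording the intersections $\mathfrak C\cap T^k\mathfrak C$). Because the cone angle is a multiple of $\theta=\frac18$ and the two centers lie in the same half-plane with the geometric relation between $A$, $B$ and the small octagon noted in the text, the return map will be a piecewise isometry defined on finitely many pieces (a rotation by a multiple of $\frac{2\pi}{8}$ on one distinguished piece, translations on the others). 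I would record the recoding morphism from the letters of $\hat T$ back to the natural alphabet $\{1,2\}$, exactly as in the $\sigma=1$ case where $A=12^41^3$, etc.

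Next I would establish self-similarity: take the first return map of $\hat T$ to the distinguished piece (call it $A$), and show by a direct computation that it is conjugate, via an affine homeomorphism (a homothety composed with a rotation), to $\hat T$ itself. This is the crucial point, and it is where I expect the main obstacle to lie — because here $\sigma=3$ means the octagon centered at $O_1$ is twice the size of the one centered at $O_2$, so the bookkeeping of which sub-pieces of $A$ return first, and in what cyclic order, is more delicate than in the symmetric or $\sigma=1$ situations; one may well need an intermediate induction (as in the proof of Proposition \ref{prop14+1}, where $\hat T_A$ itself had to be renormalized and a composite substitution $\eta\circ\theta$ produced). I would extract the return words of the letters $A,B,C,\dots$ under this induced map and read off the substitution(s).

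Having the substitution, I would invoke the criterion cited in the paper — \cite{Bed.Ca.11}, Lemma 31 — that if the first return map to $\mathfrak C$ is conjugated by a homeomorphism to $T$, then the language of $T$ is substitutive; the recoding morphism then transports the substitutive description of $L(\hat T)$ to $L(T_{1/8,3})$. Finally I would account for the pieces of $\mathfrak C$ not covered by the orbit of $A$: these are periodic cells (here a mix of octagons and smaller polygons by the geometry of the two nested octagons), whose codings are images under the substitution of a finite list of explicit periodic words $z^\omega$, so that the whole language is the set of factors of those $z^\omega$ — completing the proof. The verification that $\mathfrak C$ is exactly tiled by the orbit of $A$ together with these finitely many periodic cells (no leftover region) is the second place where care is needed, and it is best checked directly from the figure together with an area/angle count.
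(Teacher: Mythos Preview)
Your outline follows the paper's approach closely, and would succeed. The one place where the paper diverges from your primary expectation is the self-similarity step: the first return map $\hat T$ to $\mathfrak C$ turns out to have \emph{eight} pieces (with return words $A=12^41^3$, $B=12^31^4$, \dots, $H=12^31^6$), while the induced map $\hat T_A$ on the piece $A$ has only \emph{four} pieces --- so $\hat T_A$ is not conjugate to $\hat T$ itself as you first suppose. Instead, the paper observes that $\hat T_A$ coincides with the four-piece first-return map already computed in the $\sigma=1$ case (Proposition~\ref{prop:s=1-octa}), the return words being $A,\ ACB,\ ACBCB,\ ACBCBCB$; that map was shown there to be self-similar via the substitution $A_1\mapsto A_1$, $A_2\mapsto A_1A_2$, $A_3\mapsto A_1A_2^2$, $A_4\mapsto A_1A_2^3$. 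So rather than a further intermediate induction producing a composite substitution as in Proposition~\ref{prop14+1}, the renormalization lands directly on a previously solved parameter. Your hedge about possibly needing a second step was therefore well placed, but the actual resolution is tidier than you anticipate: one recoding $\{A_1,\dots,A_4\}\to\{A,B,C\}^*$ composed with the $\sigma=1$ substitution. The remainder of your plan --- treating the complement of the orbit of $A$ in $\mathfrak C$ as finitely many periodic cells and concluding via \cite{Bed.Ca.11}, Lemma~31 --- matches the paper exactly.
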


\begin{figure}
\begin{center}
\begin{tikzpicture}[scale=.25]
\draw (-10,0)--(10,0);
\draw[]  (3,0)--(5,2)--(5,5)--(3,7)--(0,7)--(-2,5)--(-2,2)--(0,0);
\draw (0,7) node[above]{$O_1$};
\draw[dashed] (0,7)--(3,0);
\draw[dashed] (0,0)--(3,7);
\draw (1.5,3.5) node[left]{$O_2$};
\draw (3,0)--++(5,5);
\draw[dotted] (0,7)--(7,4);
\draw (3,0)--++(-5,-5);
\draw (3,0) node[below]{$A$};
\draw (7,4) node[below]{$B$}; 
\end{tikzpicture}
\caption{Positions of the centers in the case $\theta=\frac{1}{4}, \sigma=3$.}\label{fig-2octa}
\end{center}
\end{figure}
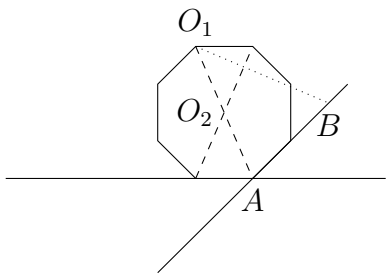

\begin{proof}
The first return map $\hat{T}$ to the cone $\mathfrak C$ is a piecewise isometry defined by Figure \ref{fig=r143}. 
It is defined on eight pieces. The return words of these pieces are given by
$$\begin{array}{c|c|c|c|c|c|c|c}
A&B&C&D&E&F&G&H\\
\hline 12^41^3&12^31^4&12^41^4&12^31^5&12^21^5&121^6&12^21^6&12^31^6\end{array}$$

Now we describe the first return map of $\hat T$ to $A$. 

This map is defined on four pieces, named $A_1,\dots, A_4$. The return words are 
$$\begin{array}{c|c|c|c}
A_1&A_2&A_3&A_4\\
\hline
A&ACB&ACBCB& ACBCBCB\end{array}$$

This map is  the same map as the one given in Proposition \ref{prop:s=1-octa}. We explain in Figure \ref{returntoA:143} how to obtain it. Now we can use the same arguments, as in the previous paper (see \cite{Bed.Kab.14}). We deduce that it defines a substitutive language given by 
$$\begin{array}{c|c|c|c}
A_1&A_2&A_3&A_4\\
\hline A_1&A_1A_2&A_1A_2A_2&A_1A_2A_2A_2\end{array}$$

Of course the orbit of $A$ under $\hat T$ does not fill $\mathfrak C$. Thus we need to work on the complement to finish the study. This can be done in a similar way to \cite{Bed.Kab.14}. We obtain that the language is also substitutive.
\end{proof}
\begin{figure}[]
\begin{center}
\begin{tikzpicture}[scale=.125]
\draw (-40,0)--(0,0);
\draw (0,0)--++(135:30);
\draw (-5,0)--++(135:5);
\draw (-3.535,0)--(-3.535,3.535);
\draw (0,0)--++(135:17.07)--++(-17.07,0)--++(135:10);
\draw (0,0)--++(135:17.07)--++(-25,0);
\draw (0,0)--++(135:17.07)--++(-12.07,0)--++(135:-17.07)--++(45:8.535);
\draw (0,0)--++(135:17.07)--++(-12.07,0)--++(135:-17.07)--++(45:7.2)--++(0,1.775);
\draw(-40,17) node{$A$};
\draw(-30,5) node{$B$};
\draw(-25,17) node{$C$};
\draw(-15,10) node{$D$};
\draw(-9,2) node{\tiny$E$};
\draw(-1.8,1) node{\tiny$F$};
\draw(-4.8,3) node{\tiny$G$};

\draw (10,0)--(50,0);
\draw (50,0)--++(135:40);
\draw (42.93,0)--++(0,7.07);
\draw (42.93,0)--++(0,3.535)--++(3.535,0);
\draw (22.93,0)--++(45:19.25)--++(-30,0);
\draw (22.93,0)--++(45:12.07)--++(10,0);
\draw (50,0)--++(135:34.14)--++(-15,0);
\draw (47.5,0)--++(45:1.775);

\draw(15,26) node{$\hat T A$};
\draw(15,19) node{$\hat T B$};
\draw(15,8) node{$\hat TC$};
\draw(35,5) node{$\hat T D$};
\draw(38,10) node{\tiny$\hat TE$};
\draw(44,5) node{\tiny$\hat TF$};
\draw(45,2) node{\tiny$\hat T G$};

\draw[->] (2,8)--++(3,0);
\draw (2,10) node{$\hat T$};
\end{tikzpicture}
\caption{Return map $\hat T$ for the non symmetric rotation of angle $\frac{\pi}{4}$ and $\sigma=3$.}\label{fig=r143}
\end{center}
\end{figure}
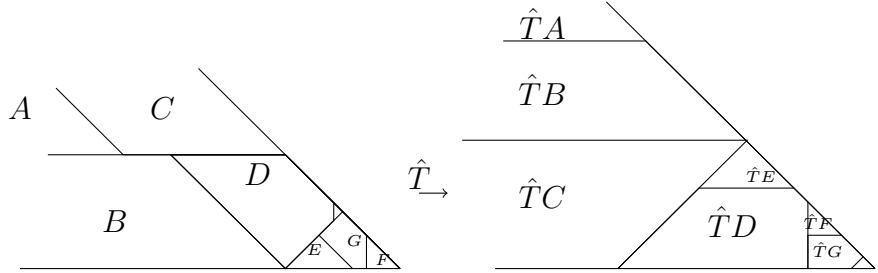

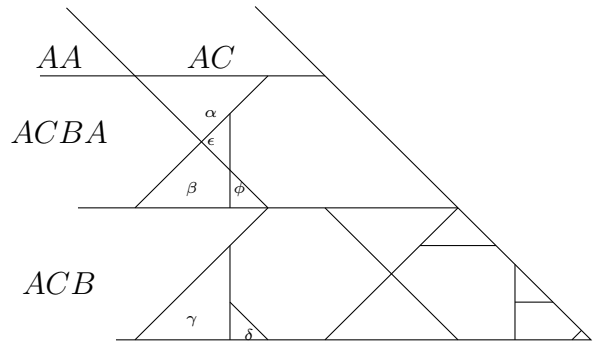
\begin{figure}
\begin{center}
\begin{tikzpicture}[scale=.25]
\draw (-25,0)--(0,0)--++(135:25);
\draw (-4,0)--++(0,4);
\draw (-4,2)--(-2,2);
\draw (-1,0)--++(45:.705);
\draw (-5,5)--++(-4,0);
\draw (-7,7)--(-14,0);
\draw (-7,7)--++(-20,0);
\draw (-14,14)--++(-15,0);
\draw (-14,7)--(-7,0);
\draw (-17,7)--++(135:15);
\draw (-17,7)--(-24,0);
\draw (-19,7)--++(0,5);
\draw (-17,14)--(-24,7);
\draw (-19,0)--++(0,5);
\draw (-19,2)--(-17,0);

\draw (-20,15) node{$AC$};
\draw (-28,15) node{$AA$};
\draw (-28,11) node{$ACBA$};
\draw (-20,12) node{\tiny $\alpha$};
\draw (-21,8) node{\tiny $\beta$};
\draw (-28,3) node{$ACB$};
\draw (-18.5,8) node{\tiny$\phi$};
\draw (-18,.3) node{\tiny$\delta$};
\draw (-20,10.5) node{\tiny $\epsilon$};
\draw (-21,1) node{\tiny $\gamma$};
\end{tikzpicture}
\caption{First return map of $\hat{T}$ to $A$ for $\theta=\frac{1}{8}$, $\sigma=3$. We denote $\alpha=ACBC$, $\beta=ACBCBA$, $\gamma=ACBCB$, $\delta=ACBCBCB$, $\epsilon=ACBCBC$ and $\phi=ACBCBCBA$}\label{returntoA:143}
\end{center}
\end{figure}


\subsection{Case $\sigma<1$}
We finish by the case $\sigma=\frac{1}{3}$, without all the details.
We consider the first return map to $\mathfrak C$. The structure of $\hat{T}$ is given by Figure \ref{fig1413}. This map is defined on $9$ pieces which are given by the coding
$$\begin{array}{c|c|c|c|c|c|c|c|c}
A_1&A_2&A_3&A_4&A_5&A_6&A_7&A_8&A_9\\
\hline
12^41^4&12^51^4&12^41^5&12^51^5&12^41^6&12^51^6&12^61^6&12^71^7&12^61^7
\end{array}$$
The end of the proof is similar to the other cases.

\begin{figure}
\begin{center}
\begin{tikzpicture}[scale=.15]
\draw (-35,0)--(15,0);
\fill (-6,0)--(-10,4)--(-10,10)--(-6,14)--(0,14)--(4,10)--(4,4)--(0,0);
\fill (0,0)--(-2,-2)--(-2,-5)--(0,-7)--(3,-7)--(5,-5)--(5,-2)--(3,0);
\draw (-6,0)--++(-30,30);
\draw (0,0)--++(20,20);
\draw (10,0)--++(20,20);
\draw (4,4)--++(0,25);
\draw[dashed] (-10,0)--(-10,4);
\draw[dashed] (-17,0)--(-17,3);
\draw[dashed] (-20,0)--++(10,10);
\draw[dashed] (-30,0)--++(12,12);
\draw[dashed] (-10,1)--(-8.5,2.5);
\draw[dashed] (-10,3)--++(-3.5,3.5);
\draw[dashed] (-20,14)--++(-15,0);
\draw[dashed] (-27,21)--++(-8,0);
\draw (0,0)--++(-15,-15);
\draw (-6,-3) node{$T\mathfrak C$};
\draw (3,-7)--++(10,10);
\draw (3,-7)--++(20,0);
\draw (5,-5)--++(0,10);
\draw (5,-2)--++(135:6);
\draw (11,11)--++(0,20);
\draw (4,6)--++(2,0);
\draw (7.5,7.5)--(4,11);
\draw (4,15)--(-1,15);
\draw (0,14)--++(135:20);
\draw (4,20)--++(135:10);
\draw (-7,21)--++(-20,0);
\draw (-6,14)--++(-14,0);
\draw (-7,14)--++(45:5);
\draw (-2,14)--++(0,2);
\draw (-11,14)--++(0,-5);
\draw (-10,8)--++(-2,0);
\draw (-16,14)--++(45:-3);
\draw (-9,14)--++(135:-3);
\draw (10,-3) node{$T^5\mathfrak C$};

\draw (-35,25) node{\tiny$A_1$};
\draw (-30,17) node{\tiny$A_2$};
\draw (-30,7) node{\tiny$A_3$};
\draw (-20,7) node{\tiny$A_4$};
\draw (-18,1) node{\tiny$A_5$};
\draw (-13,3) node{\tiny$A_6$};
\draw (-8,1) node{\tiny$A_9$};
\end{tikzpicture}
\caption{How to find the first return map to  the cone $\mathfrak C$ for $\sigma=\frac{1}{3}, \theta=\frac{1}{8}$. The set $T^5\mathfrak C$ is the first one which is splitted in two parts by the real line. }\label{fig1413}
\end{center}
\end{figure}
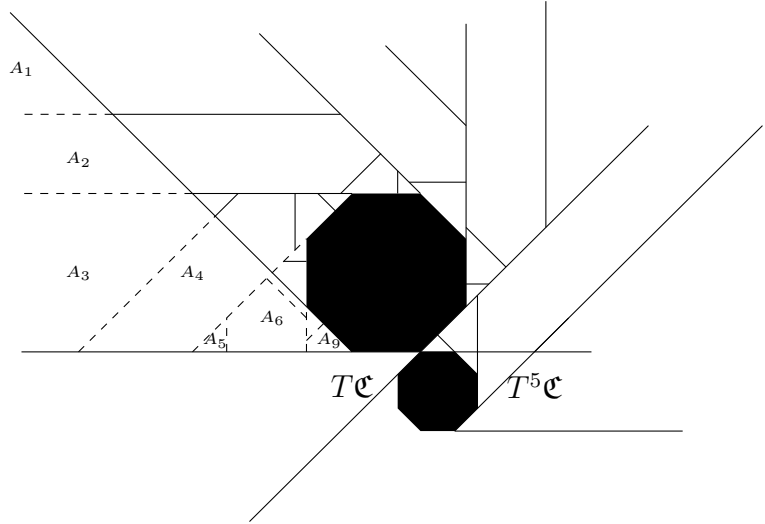

\section{Last part of the theorem}
The following proposition shows the limit in the theorem of Goetz-Quas since it was only valuable for small values of $\sigma$.

 \begin{proposition}
Consider the two following piecewise rotations: one of angle $\theta=\frac{1}{4}$ and $\sigma=3$ and one of angle $\frac{1}{8}$ and $\sigma=\frac{1}{3}$. Then
\begin{itemize}
\item There are several shapes and sizes for the cells of periodic words. 
\item A periodic cell does not form an invariant annulus with touching cells. 
\item The union of several periodic cells forms an invariant annulus.
\end{itemize}
\end{proposition}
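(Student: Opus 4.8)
The three assertions are essentially read off from the explicit descriptions obtained in the preceding sections; the plan is to make that reading-off precise and to add the one genuinely topological observation that the third bullet needs. For the first bullet: for $\theta=\frac14$ and $\sigma=3$, Proposition~\ref{prop14+1} together with the computation preceding it describes the periodic cells of $T$. Recoding $A,B,C,D$ back to the natural alphabet, these are rectangles whose dimensions are dictated by the words $\sigma_{4,s,0}^{n}(\cdot)$ and $(BC^{k})^{\omega}$, plus the square coded by $1^{\omega}$; exhibiting two of them with different aspect ratio (say the fixed square and a rectangle coded by $(BC)^{\omega}$) gives the claim here. For $\theta=\frac18$ and $\sigma=\frac13$ the first return map $\hat T$ to $\mathfrak C$ is the nine-piece map of Figure~\ref{fig1413}; unwinding the coding $A_{1},\dots,A_{9}\mapsto 12^{4}1^{4},\dots$ back to $T$ produces periodic cells that are octagons of at least two different sizes — those marked in Figure~\ref{fig1413}, one around $O_{2}$ and one around $O_{1}$ — together with smaller polygons. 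So in both cases there are cells of several shapes and sizes.

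For the second bullet, fix a periodic cell $\mathcal P$ coded by a short periodic word — the fixed square, resp.\ octagon, around $O_{1}$ will do, or a cell coded by $(BC)^{\omega}$. Its $T$-orbit $\mathcal P,T\mathcal P,\dots,T^{p-1}\mathcal P$ is a disjoint union of $p$ congruent copies, and from the explicit formula for $T_{\theta,\sigma}$ — equivalently, from the pictures of the orbit of $\mathfrak C$ drawn on the right of Figures~\ref{nonsym+1} and~\ref{fig1413} — one checks that these copies do not wrap around either centre of rotation: the union $\bigcup_{k}T^{k}\mathcal P$ is either disconnected or simply connected, hence is not homeomorphic to an annulus. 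This is exactly where largeness of $\sigma$ enters: the two centres $O_{1},O_{2}$ are now far apart relative to the cell sizes, so the orbit of a single cell no longer closes up, in contrast with the small-$\sigma$ regime of Goetz--Quas in which each periodic cell lies in such an invariant ring. The same inspection handles the finitely many ``small'' cells, and the substitutive self-similarity propagates it to every periodic cell.

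For the third bullet, recall that, up to the zero-measure discontinuity set, the plane is partitioned into periodic cells — this is the content of the substitutive descriptions above — and that $T$ permutes these cells. Since $O_{1}$ and $O_{2}$ have bounded (indeed periodic) orbits, one can choose a Jordan curve $\gamma$ enclosing $O_{1}$ and $O_{2}$ and meeting only finitely many periodic cells; closing this finite family under $T$ gives a finite set $\mathcal S$ of periodic cells, and
$$\mathcal A=\bigcup_{\mathcal P\in\mathcal S}\mathcal P$$
is a bounded, closed, $T$-invariant union of periodic cells that contains $\gamma$, so its complement has a bounded component containing $O_{1}$ and $O_{2}$. Thus $\mathcal A$ is an invariant annulus in the sense required by the statement: a barrier of invariant cells confining every orbit to one side of it. Concretely, in each example one may take $\gamma$ through one ``generation'' of cells produced by the substitution, which already forms a closed ring around the previous generation (compare the explicit tiling of Figure~\ref{13pavage} for $\theta=\frac13$). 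This re-proves boundedness of the orbits, now via a union of cells instead of a single cell — exactly the point at which the Goetz--Quas argument reaches its limit.

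The delicate step is the second bullet: there are infinitely many periodic cells, so they cannot be checked one by one, and one must argue structurally from the self-similar form of the tiling (or isolate a finite generating family and invoke self-similarity); one must also be careful about what ``annulus'' means — a connected set carrying a non-contractible loop, or a separating subset of the plane — when extracting it from the figures. Once the ``plane partitioned into periodic cells'' picture and the invariance of a large finite sub-union are in hand, the third bullet is routine.
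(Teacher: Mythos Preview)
Your approach is considerably more abstract than the paper's, which is purely an explicit exhibition of finitely many periodic words and a reference to the accompanying pictures. For $\theta=\frac14,\sigma=3$ the paper simply names the words $1^{\omega}$, $(12211)^{\omega}$ and $(12^{2}1^{3}2^{2}1^{3}21^{2})^{\omega}$, draws the five black squares of the second orbit and the white squares of the third, and observes from the figure that the five black squares touch but do not close up into a ring. For $\theta=\frac18,\sigma=\frac13$ the paper names $1^{\omega}$, $2^{\omega}$, $(1^{4}2^{5})^{\omega}$, $(2^{4}1^{5})^{\omega}$, $(1^{5}2^{5})^{\omega}$ and $(1^{5}2^{4}1^{5}2^{4})^{\omega}$, points to Figure~\ref{fig=nonsym=simple}, and reads off: octagons of two sizes (bullet one), a square orbit that does not form a ring (bullet two), and the union of the $(1^{4}2^{5})^{\omega}$ and $(2^{4}1^{5})^{\omega}$ orbits that \emph{does} form an annulus (bullet three). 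No structural or topological argument is offered; everything is read from explicit coordinates and pictures.

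Two points where your write-up goes astray rather than merely differing in style. First, your explanation ``this is exactly where largeness of $\sigma$ enters: the two centres are now far apart'' is wrong for the second example, where $\sigma=\frac13<1$ and the centres lie in \emph{opposite} half-planes, each fixed by $T$; the failure of a single orbit to close up there is not a large-$\sigma$ phenomenon but a consequence of the two invariant octagons having different sizes. Second, your treatment of the second bullet does more than is asked and less than it claims: the statement only requires \emph{one} periodic cell whose orbit is not an annulus, so the ``self-similarity propagates it to every periodic cell'' step is unnecessary---and as written it is not a proof, since self-similarity relates cells of different generations by affine maps that need not preserve the annulus/non-annulus dichotomy for unions of iterates under $T$ (the iterates live at a fixed scale, not the rescaled one). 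Your Jordan-curve argument for the third bullet is a reasonable general mechanism, but it is not what the paper does, and your concrete pointer (``compare Figure~\ref{13pavage} for $\theta=\frac13$'') is to the wrong angle; in the paper the annulus is exhibited by naming two explicit orbits whose nine-cell union visibly surrounds the two central octagons.
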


\subsection{{Case $\theta=\frac{1}{4}, \sigma=3$}}
Consider the case  $\theta=\frac{1}{4}, \sigma=3$. Around $O_1$ there is an invariant square. The point $O_2$ is inside this square at the center of the square in dashed line. Then we consider the black squares. They represent the orbit $(12211)^\omega$. The little white square represent the orbit $(12^21^32^21^321^2)^\omega$. It is clear that the orbit of the black square does not form a closed ring.

\begin{tikzpicture}[scale=.25]
\draw (-10,0)--(10,0);
\draw (-3,0)--(-3,6)--(3,6)--(3,0);
\draw[dashed] (1,0)--(1,2)--(3,2);
\draw (-5,0)--(-5,6)--(-3,6);
\draw (-3,0)--(-3,-2)--(3,-2)--(3,0);
\draw (5,0)--(5,6)--(3,6);
\draw (-3,6)--(-3,8)--(3,8)--(3,6);
\draw (-5,4)--++(2,0);
\draw (-5,2)--++(2,0);
\draw (3,4)--++(2,0);
\draw (3,2)--++(2,0);
\draw (-1,-2)--(-1,-4)--(1,-4)--(1,-2)--cycle;
\draw (-1,10)--(-1,8)--(1,8)--(1,10)--cycle;
\fill (3,-2)--(5,-2)--(5,0)--(3,0)--cycle;
\fill (-1,-2)--(1,-2)--(1,0)--(-1,0)--cycle;
\fill (3,2)--(5,2)--(5,4)--(3,4)--cycle;
\fill (-1,6)--(1,6)--(1,8)--(-1,8)--cycle;
\fill (-5,2)--(-3,2)--(-3,4)--(-5,4)--cycle;
\draw (-7,2)--(-5,2)--(-5,4)--(-7,4)--cycle;
\draw (7,2)--(5,2)--(5,4)--(7,4)--cycle;
\draw (7,0)--(5,0)--(5,-2)--(7,-2)--cycle;
\draw (5,-2)--(3,-2)--(3,-4)--(5,-4)--cycle;
\draw (0,3) node{\tiny $0_1$};
\draw (2,1) node{\tiny $0_2$};
\end{tikzpicture}

\subsection{Case $\theta=\frac{1}{8}, \sigma=\frac{1}{3}$}

We refer to  Figure \ref{fig=nonsym=simple} for $\sigma=\frac{1}{3}$. The two cells of the orbits $1^\omega$ and $2^\omega$ are equal to some regular octagons of different sizes. Consider the words $(1^42^5)^\omega$ and $(2^41^5)^\omega$, they correspond to two periodic orbits made by two regular octagons images of the two first by translations. The union of these two octagons form an annulus around the origin. Now, consider the orbit $(1^52^5)^\omega$. It corresponds to the square on Figure \ref{fig=nonsym=simple}. 
Finally, the orbit  $(1^52^41^52^4)^\omega$ corresponds to the small octagons outside the square.

\bibliographystyle{plain}
\bibliography{biblio-rotv3}
\end{document}